\newtheorem{theorem}{Theorem}
\newtheorem{proposition}[theorem]{Proposition}
\newtheorem{lemma}[theorem]{Lemma} 
\newtheorem{corollary}[theorem]{Corollary}
\newtheorem{example}[theorem]{Example}
\newtheorem{definition}[theorem]{Definition}
\newtheorem{remark}[theorem]{Remark}
\newtheorem{conjecture}[theorem]{Conjecture}
\newcommand{\CAT}{{\mathrm{CAT}}}
\newcommand{\algrk}{{\mathrm{rank}}}
\title{Algebraic Rank of $\CAT(0)$ Groups\thanks{2010 \emph{AMS Subject Classification} 57M07 (20F55, 20F67, 20F65)} \thanks{partially funded by NSF grant DMS-1207782. }}
\author{Raeyong Kim}
\date{\today}
\newenvironment{proof}[1][]{\begin{trivlist} \item[\hskip\labelsep
\emph{Proof#1.}]}{\foorp \end{trivlist}}
\newcommand{\foorp}{{\unskip\nobreak\hfil\penalty50
 \hskip1em\vadjust{}\nobreak\hfil \vrule height3pt width3pt depth0pt
 \parfillskip=0pt \finalhyphendemerits=0 \par}}
\begin{document} 

\maketitle

\begin{abstract} 

We study the algebraic rank of various classes of $\mathrm{CAT}(0)$ groups. They include right-angled Coxeter groups, right-angled Artin groups, relatively hyperbolic groups and groups acting geometrically on $\CAT(0)$ spaces with isolated flats. As one of our corollaries, we obtain a new proof of a result on  commensurability of Coxeter groups. 

\end{abstract}


\section{Introduction}

Let $M$ be a complete Riemannian manifold of nonpositive sectional curvature. The \emph{geometric rank} of a geodesic $\gamma$ in $M$, denoted by $rk(\gamma)$, is the dimension of the vector space of parallel Jacobi fields along $\gamma$. Then the \emph{geometric rank} of $M$ is defined to be the minimum of $rk(\gamma)$ over all geodesics $\gamma$ in $M$.

The celebrated rank-rigidity theorem, due to Ballmann (\cite{Bal}), and to Burns-Spatzier (\cite{BurSpa}), states that if $M$ has bounded nonpositive sectional curvature and finite volume, then the universal cover $\widetilde{M}$ is a flat Euclidean space, a symmetric space of non-compact type, a space of rank $1$ or a product of such spaces.

In \cite{PraRag}, Prasad and Raghunathan introduced the notion of the algebraic rank, $\mathrm{rank}(G)$, of a group $G$. (See Section 2 for the definition.) Ballmann and Eberlein proved that if $\Gamma$ is the fundamental group of a complete Riemannian manifold $M$ of bounded nonpositive sectional curvature and of finite volume, then $\mathrm{rank}(\Gamma)$ is equal to the geometric rank of $M$ (see \cite{BalEbe}). By combining this with the rank-rigidity theorem, we have that, for a complete Riemannian manifold $M$ of bounded nonpositive sectional curvature and of finite volume, if $\widetilde{M}$ does not have an Euclidean factor and $\Gamma=\pi_{1}(M)$ has higher algebraic rank, either (1) $\Gamma$ is a lattice in a semi-simple Lie group of higher rank, (2) it has a finite index subgroup which splits as a direct product, i.e., $\Gamma$ is a virtually product, or (3) it acts on a product without being a virtual product.

There is an analogous notion of geometric rank for $\mathrm{CAT}(0)$ spaces. A \emph{geometric flat of dimension $n$} in a complete $\mathrm{CAT}(0)$ space $X$ is a closed convex subset of $X$ which is isometric to the Euclidean $n$-space. A geodesic line $L$ is said to have \emph{rank one} if it does not bound a flat half-plane. A complete $\mathrm{CAT}(0)$ space $X$ is said to have \emph{higher geometric rank} if no geodesic in $X$ has rank one.

Let $X$ be a complete $\mathrm{CAT}(0)$ space and $G$ be a group acting geometrically (i.e., properly and cocompactly by isometries) on $X$. In view of the Ballmann-Eberlein's result, it is natural to ask the similar question for $\mathrm{CAT}(0)$ spaces: 

\begin{conjecture}
$G$ has higher algebraic rank if and only if $X$ has higher geometric rank. 
\end{conjecture}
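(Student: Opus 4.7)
\medskip
\noindent\textbf{Proof proposal.}
The plan is to establish the conjecture class by class, following the four families announced in the abstract (right-angled Coxeter groups, right-angled Artin groups, relatively hyperbolic groups, and groups acting on $\CAT(0)$ spaces with isolated flats), since a uniform proof covering all $\CAT(0)$ groups is not within reach given the current state of singular rank rigidity. In each class I would split the argument into its two implications, after first unpacking the Prasad--Raghunathan algebraic rank into a geometrically convenient form: $\algrk(G)\ge 2$ says that every Cartan subgroup of $G$---i.e., every maximal polycyclic subgroup with finite-index normalizer---has torsion-free rank at least two.

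\medskip
The easier direction, higher geometric rank $\Rightarrow$ higher algebraic rank, should be attackable uniformly. Suppose every geodesic of $X$ bounds a flat half-plane, and let $C\le G$ be any Cartan subgroup, containing say an axial element $g$. By hypothesis the axis of $g$ bounds a half-plane, and a stabilizer analysis via the flat torus theorem produces an element $h\in G$ commuting with a power of $g$ but independent of $C$; adjoining $h$ would extend $C$ inside a polycyclic group, contradicting maximality unless $C$ already had rank $\ge 2$. The remaining bookkeeping is to produce the axial $g$ in the first place, which follows from a pigeonhole argument on a compact fundamental domain combined with the Bridson--Haefliger solvable subgroup theorem.

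\medskip
For the harder direction, higher algebraic rank $\Rightarrow$ higher geometric rank, the class-by-class approach becomes essential. For right-angled Coxeter and Artin groups I would translate the existence of enough $\mathbb{Z}^{2}$ subgroups with large centralizers into a combinatorial condition on the defining graph---roughly, that the graph is a nontrivial spherical join---and then invoke the Caprace--Sageev essential-core machinery to rule out rank-one axes in the corresponding Davis or Salvetti cube complex. For relatively hyperbolic groups and for $\CAT(0)$ spaces with isolated flats (Hruska--Kleiner), I would argue contrapositively: a rank-one axial element $h$ has virtually cyclic centralizer and, by ping-pong against a generic conjugate, lies in no non-cyclic polycyclic subgroup, so $\langle h\rangle$ is a rank-one Cartan subgroup witnessing $\algrk(G)=1$.

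\medskip
The main obstacle I anticipate is completing this contrapositive in the relatively hyperbolic and isolated-flats settings. It is not automatic that $\langle h\rangle$ has finite-index normalizer once parabolic subgroups are present, since parabolic interactions can inflate $N_G(\langle h\rangle)$ and disqualify $\langle h\rangle$ as a Cartan subgroup. Resolving this will require the Hruska--Kleiner structure to guarantee that the axis of a rank-one $h$ is genuinely disjoint from all maximal flats, so that its normalizer is virtually cyclic. Ensuring, in addition, that the notion of algebraic rank used here is stable under passage to finite-index subgroups---so that the theorem transfers along commensurability, as will be needed for the Coxeter corollary promised in the abstract---is a secondary but genuine technical hurdle.
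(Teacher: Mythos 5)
The statement you are trying to prove is stated in the paper as a \emph{conjecture}: the paper offers no proof of it, and only provides partial evidence by computing $\algrk(G)$ for specific classes of groups (right-angled Coxeter and Artin groups, relatively hyperbolic groups, groups acting on $\CAT(0)$ spaces with isolated flats), which in each case amounts to verifying one consequence of the conjecture rather than the biconditional itself. So there is no proof in the paper to compare yours against, and your proposal should be judged as an attempt at an open problem.

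As such, it has genuine gaps. First, your ``unpacking'' of the Prasad--Raghunathan rank into a statement about maximal polycyclic subgroups with finite-index normalizer is not the definition the paper uses. The paper's definition is via the sets $\mathcal{A}_i(G)$ of elements whose centralizer contains a finite-index free abelian subgroup of rank $\leq i$, together with the requirement that $G$ be covered by \emph{finitely many} translates of $\mathcal{A}_i(G)$, and then a supremum over all finite-index subgroups. Your reformulation is neither established nor obviously equivalent, and everything downstream depends on it. Second, and more seriously, your ``easier direction'' asserts that if the axis of an axial element $g$ bounds a flat half-plane, then a ``stabilizer analysis via the flat torus theorem'' produces an element $h$ commuting with a power of $g$ and independent of it. The Flat Torus Theorem runs in the opposite direction: it converts a $\mathbb{Z}^2$ subgroup into a flat, not a flat half-plane into a $\mathbb{Z}^2$ subgroup. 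Producing group elements from the mere geometric presence of half-planes is precisely the content of the (open) rank-rigidity problem for $\CAT(0)$ spaces; in the Riemannian case Ballmann--Eberlein needed the full Ballmann/Burns--Spatzier rank-rigidity theorem for exactly this step, and no singular analogue is available here. Third, in the direction the paper does address, exhibiting a single rank-one element with virtually cyclic centralizer only shows $\mathcal{A}_1(G)\neq\emptyset$; the paper's actual work (the $s$-good word combinatorics for Coxeter groups, the horoball and Koubi-type arguments in the relatively hyperbolic case) goes into covering the entire group, and every finite-index subgroup, by finitely many translates of $\mathcal{A}_1$. Your sketch does not engage with that covering condition at all.
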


In this paper, we study the algebraic rank of various $\CAT(0)$ groups. They include right-angled Coxeter groups, right-angled Artin groups, relatively hyperbolic groups and groups acting geometrically on $\CAT(0)$ spaces with isolated flats. In Section 3, we prove that if $W$ is an infinite irreducible non-affine right-angled Coxeter group, then $\mathrm{rank}(W) =1$. As a corollary, we obtain a new proof for the question posed by M. Davis in \cite{Dav}, namely, $W$ cannot be commensurable to any uniform lattice in a higher rank non-compact connected semi-simple Lie group. In Subsection \ref{raag}, we prove that any non-join right-angled Artin group has an algebraic rank of $1$. In Section \ref{relhypgppre}, we study algebraic rank of groups which act on $\mathrm{CAT}(0)$ spaces with isolated flats. More precisely, if a group $G$ acts geometrically on $\mathrm{CAT}(0)$ space with isolated flats $\mathcal{F}$ and $|\mathcal{F}| \neq 1$, then $\mathrm{rank}(G) \leq 1$. It is well known that such a group $G$ is hyperbolic relative to a family of stabilizers of flats in $\mathcal{F}$. We use the dynamics of a relatively hyperbolic group acting on the boundary of a $\delta$-hyperbolic space to prove that the algebraic rank of a relative hyperbolic group is $\leq 1$ if there are at least two peripheral subgroups containing elements of infinite order. It follows immediately that $\mathrm{rank}(G) \leq 1$.

This paper is part of author's Ph.D. thesis. The author thanks the thesis advisor Jean Lafont for his guidance throughout this research project. The author also thanks Mike Davis for helpful conversations concerning Coxeter groups.


\section{Algebraic Rank of Groups}\label{algrk}

\begin{definition}
For a given group $G$, let $\mathcal{A}_{i}(G)$ be the set consisting of elements such that the centralizer contains a free abelian subgroup of rank $\leq i$ as a subgroup of finite index. Define $r(G)$ to be the minimum  $i$ such that $G$ can be expressed as the union of finitely many translates of $\mathcal{A}_{i}(G)$. In other words,
$$\displaystyle{r(G) = min\{i \,|\,\, \textrm{there are finitely many elements}\, g_{j} \in G \, \textrm{such that}\,\, G = \bigcup_{j} g_{j}\mathcal{A}_{i}(G)\}}$$
Finally, the \emph{algebraic rank} of $G$, $\mathrm{rank}(G)$, is the supremum of $r(G^{*})$ over all finite index subgroups $G^{*}$ of $G$.
\end{definition}

We allow the possibility that $\mathrm{rank}(G)=0$. For example, if $G$ is a finite group, then $\mathrm{rank}(G)=0$. On the other hand, if $G$ is torsion-free, then $\mathrm{rank}(G) >0$ : suppose that $\mathrm{rank}(G)=0$. In particular, $r(G)=0$. Then the set $\mathcal{A}_{0}(G)$ must be non-empty. But $\mathcal{A}_{0}(G)$ is a subset of the set of finite order elements in $G$. 

We set $\mathrm{rank}(G)=\infty$ if the sets $\mathcal{A}_{i}(G)$ are empty, or if $G$ cannot be covered by  finitely many translates of any of the sets $\mathcal{A}_{i}(G)$. For example, if a group $G$ has an infinitely generated free abelian center, then $\mathrm{rank}(G)=\infty$. In fact, there exist finitely presented examples of such groups. More specifically, Hall obtained in {\cite{Hal}} the existence of a finitely generated group which has infinitely generated free abelian center. Using {\cite{Oul}}, we can obtain a finitely presented group having  infinitely generated free abelian center.

\begin{remark}
$r(G)$ is not necessarily equal to $\mathrm{rank}(G)$. Following \cite{BalEbe}, we present an example of  a group satisfying $r(G) < \mathrm{rank}(G)$. See {\cite[Section 4]{BalEbe}} for more examples.

Let $G$ be the fundamental group of a flat Klein bottle, acting on $\mathbb{E}^{2}$ by isometries. (In the simplest case $G$ is generated by $\phi_{1} : (x,y) \to (x+1,-y)$ and $\phi_{2}:(x,y) \to (x,y+1)$.) The set $\mathcal{A}_{1}(G)$ consists of all elements of $G$ that reverse the orientation of $\mathbb{E}^{2}$. Then $G=\mathcal{A}_{1}(G) \cup \gamma\mathcal{A}_{1}(G)$, for any $\gamma \in \mathcal{A}_{1}(G)$. Therefore, $r(G) < 2=\mathrm{rank}(G) = \mathrm{rank}(\mathbb{E}^{2})$. 
\end{remark}

We close the section by mentioning that algebraic rank of groups behaves well under products and taking finite index subgroups. 

\begin{proposition}{\cite[Proposition 2.1]{BalEbe}}\label{algrkpro}
Let $G$ be an abstract group.
\begin{enumerate}
\item If $G'$ is a finite index subgroup of $G$, then $r(G) \leq r(G')$ and $\mathrm{rank}(G)=\mathrm{rank}(G')$.
\item If $G=G_1 \times \cdots \times G_n$, then 
$$r(G) = \sum_{i=1}^{n} r(G_{i}),\qquad \mathrm{rank}(G) = \sum_{i=1}^{n} \mathrm{rank}(G_{i})$$
\end{enumerate}
\end{proposition}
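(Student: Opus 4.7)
The plan is to first handle Part (1) by a direct centralizer-and-coset argument, then to establish the key structural identity
\[
\mathcal{A}_i(G_1 \times \cdots \times G_n) = \bigcup_{i_1 + \cdots + i_n \leq i} \mathcal{A}_{i_1}(G_1) \times \cdots \times \mathcal{A}_{i_n}(G_n),
\]
and deduce Part (2) from it, with a combinatorial minimality trick for the nontrivial $r(G) \geq \sum r(G_j)$ direction.

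For Part (1), the point is that when $G'$ has finite index in $G$, the centralizer $Z_{G'}(g) = Z_G(g) \cap G'$ has finite index in $Z_G(g)$, so $Z_{G'}(g)$ and $Z_G(g)$ are virtually free abelian of the same rank, giving $\mathcal{A}_i(G') = \mathcal{A}_i(G) \cap G' \subseteq \mathcal{A}_i(G)$. Given a covering $G' = \bigcup_j g_j \mathcal{A}_{r(G')}(G')$ and coset representatives $h_1, \ldots, h_m$ of $G'$ in $G$, one immediately gets $G = \bigcup_{k,j} h_k g_j \mathcal{A}_{r(G')}(G)$, so $r(G) \leq r(G')$. The equality $\mathrm{rank}(G) = \mathrm{rank}(G')$ then follows by noting that every finite-index subgroup $G^* \leq G$ can be refined to $G^* \cap G'$, which has finite index in both $G^*$ and $G'$, and applying the just-proved $r$-inequality in both directions.

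The structural identity for Part (2) comes from $Z_G(g_1, \ldots, g_n) = \prod_j Z_{G_j}(g_j)$: intersecting a finite-index free abelian subgroup $H \leq Z_G(g)$ with each factor gives finite-index free abelian subgroups $H_j \leq Z_{G_j}(g_j)$ whose internal product sits inside $H$, so $\sum_j \mathrm{rank}(H_j) \leq \mathrm{rank}(H)$, and the converse inclusion is immediate. The inequality $r(G) \leq \sum r(G_j)$ follows at once by taking products of minimizing coverings of each factor. For the reverse inequality it suffices to treat $n = 2$ and iterate. Fix a covering $G = \bigcup_{k=1}^m h_k \mathcal{A}_r(G)$ with $r = r(G)$ and $h_k = (h_{k,1}, h_{k,2})$, and expand via the identity into finitely many product cosets of the form $h_{k,1} \mathcal{A}_{i_1}(G_1) \times h_{k,2} \mathcal{A}_{i_2}(G_2)$ with $i_1 + i_2 \leq r$. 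For fixed $g_2 \in G_2$, let $M(g_2)$ be the minimum $i_2$ appearing in those cosets whose second factor contains $g_2$; running over $g_1 \in G_1$ shows $G_1 = \bigcup_k h_{k,1} \mathcal{A}_{r - M(g_2)}(G_1)$, so $r(G_1) \leq r - M(g_2)$, that is, $M(g_2) \leq r - r(G_1)$ for every $g_2$. This rearranges directly into a covering of $G_2$ by translates of $\mathcal{A}_{r - r(G_1)}(G_2)$, giving $r(G_2) \leq r - r(G_1)$ and hence $r(G_1) + r(G_2) \leq r$.

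Finally, the $\mathrm{rank}$ formula for products follows by combining Part (1) with the product formula for $r$: every finite-index subgroup $G^* \leq G = \prod G_j$ contains the finite-index product subgroup $\prod(G^* \cap G_j)$, while every tuple of finite-index subgroups of the factors yields a finite-index product subgroup of $G$, so the two sides of the defining supremum match. The main obstacle is the $\geq$ direction of the product $r$-formula, since the cost vector $(i_1, \ldots, i_n)$ in the structural identity varies with the element being covered, defeating any naive projection; the minimality trick above is what extracts a uniform-cost covering of each factor.
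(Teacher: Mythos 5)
The paper does not prove this statement at all: it is quoted verbatim from Ballmann--Eberlein \cite[Proposition 2.1]{BalEbe} and used as a black box, so there is no in-paper argument to compare against. Your proof is correct and self-contained. The two identities $\mathcal{A}_i(G') = \mathcal{A}_i(G)\cap G'$ and $\mathcal{A}_i(G_1\times\cdots\times G_n)=\bigcup_{i_1+\cdots+i_n\le i}\mathcal{A}_{i_1}(G_1)\times\cdots\times\mathcal{A}_{i_n}(G_n)$ are established properly (the only points needing care --- that a finite-index subgroup of $\mathbb{Z}^k$ is again $\mathbb{Z}^k$, and that the $H_j$ generate an internal direct product inside $H$ --- are both handled), and your minimality trick via $M(g_2)$ correctly extracts the uniform bound $r(G_1)+r(G_2)\le r(G)$, which is indeed the one genuinely nontrivial step; it is essentially the argument of the original source.
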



\section{Right -Angled Coxeter Groups and Artin Groups}
\subsection{Coxeter Groups}
A \emph{Coxeter system} $(W,S)$ is a group $W$ and a set $S=\{s_1, s_2, \cdots\}$ of generators such that $W$ has the following presentation
$$W=\langle S\, |\, (s_{i}s_{j})^{m_{ij}} =1, s_{i},s_{j} \in S\rangle,$$
where $m_{ii}=1$ and if $i \neq j$, then $m_{ij}=m_{ji}$ is a positive integer $\geq 2$ or $\infty$ (in which case we omit the relation between $s_{i}$ and $s_{j}$). $W$ is called a \emph{Coxeter group}. A Coxeter system $(W,S)$ is called \emph{irreducible} if $S$ cannot be partitioned into two nonempty disjoint subsets $S'$ and $S''$ such that each element in $S'$ commutes with each element in $S''$. The cardinality $|S|$ of $S$ is called the \emph{rank} of $W$ and we assume that $|S|$ is finite in this section. A Coxeter group $W$ is \emph{spherical} if $W$ is finite and \emph{affine} if $W$ has a finite index free abelian subgroup. 

For any subset $J \subset S$, we denote by $W_{J}$ the subgroup of $W$ generated by $J$. We call $W_{J}$  \emph{a standard parabolic subgroup}, and any conjugate of a standard parabolic subgroup is called \emph{a parabolic subgroup}. For any subset $A \subset W$, the \emph{parabolic closure} $\mathrm{Pc}(A)$ of $A$ is the smallest parabolic subgroup containing $A$. An element $\gamma$ is called \emph{essential} if $\mathrm{Pc}(\gamma)=W$.

Associated to any Coxeter group $W$, there is a $\mathrm{CAT(0)}$ polyhedral cell complex $\Sigma_{W}$, which is called  the \emph{Davis complex}, upon which $W$ acts properly discontinuously and cocompactly by isometries. $\Sigma_{W}$ can be cellulated by so called \emph{Coxeter polytopes} and, with a natural Euclidean metric on each Coxeter polytope, inherits a piecewise Euclidean metric. It was Gromov\,(right-angled case, \cite{Gro}) and Moussong\,(general case, \cite{Mou}), who showed that $\Sigma_{W}$, with this metric, is $\mathrm{CAT}(0)$. See \cite{DavBOOK} for details. An element $\gamma \in W$ is said to have \emph{rank one} if it is hyperbolic and if some (and hence any) of its axes in $\Sigma_{W}$ has rank one. In \cite{CapFuj}, Caprace and Fujiwara study rank one elements in Coxeter groups. In particular, an element $\gamma$ has rank one if and only if its centralizer is virtually infinite cyclic. In other words, any rank one element is contained in $\mathcal{A}_{1}(W)$.
 
Right-angled Coxeter groups are Coxeter groups for which $m_{ij}=2$ or $\infty$ for $i \neq j$. In this case, the Davis complex $\Sigma_{W}$ is a $\mathrm{CAT}(0)$ cubical complex. In this subsection, we prove that any infinite irreducible non-affine right-angled Coxeter group has an algebraic rank of $1$. 

\begin{remark}
\begin{enumerate}
\item Any spherical Coxeter group has an algebraic rank of $0$. (See Section \ref{algrk}.)
\item It is a consequence of Selberg's lemma that every infinite Coxeter group $W$ has a torsion-free subgroup of finite index. Therefore, such groups satisfy $\algrk(W) \geq 1$.  
\item If $W$ is infinite, irreducible and affine, then $\algrk(W) = |S|-1$.
\item Suppose that $W$ is infinite and reducible, $W=W_{T_{1}} \times W_{T_{2}} \times \cdots \times W_{T_{n}}$. By Proposition \ref{algrkpro}, $\displaystyle{\algrk(W) =\sum_{i=1}^{n} \algrk(W_{T_{i}})}$. Therefore, $\algrk(W) > 1$ unless $W$ is virtually cyclic.
\end{enumerate}
\end{remark}

Hereafter, we assume that $W$ is an infinite irreducible non-affine right-angled Coxeter group. Tits' solution to the word problem for Coxeter groups states that any two reduced expressions represent the same element in $W$ if and only if one can be transformed into the other by a series of replacements of the alternating subword $st$ by the subword $ts$. (See \cite[Sec. 3.4]{DavBOOK}.) This implies 

\begin{lemma}\label{word}
For $w \in W$, let $S(w)$ be the set of generators appearing in some (and hence any) reduced expression for $w$. If $s \in S(w)$ appears an odd (respectively, even) number of times in some expression for $w$, then $s$ appears an odd (respectively, even) number of times in any expression for $w$.
\end{lemma}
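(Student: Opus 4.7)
The plan is to detect the parity of each generator's multiplicity via the mod-$2$ abelianization. For each $s \in S$, let $e_s$ denote the corresponding basis vector in $(\mathbb{Z}/2\mathbb{Z})^{|S|}$, and define a map on the free monoid on $S$ by sending an expression $s_{i_1}s_{i_2}\cdots s_{i_n}$ to $e_{s_{i_1}} + e_{s_{i_2}} + \cdots + e_{s_{i_n}}$. By construction, the $s$-coordinate of the image of a word is exactly the number of occurrences of $s$ in that word, reduced modulo $2$.

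The main (and essentially only) step is to check that this map descends to a well-defined group homomorphism $\phi : W \to (\mathbb{Z}/2\mathbb{Z})^{|S|}$, i.e., that every defining relator of the right-angled Coxeter presentation lies in the kernel. The relators are exactly $s^2$ for each $s \in S$ and $(st)^2$ for each pair $s \neq t$ with $m_{st} = 2$. Their images are $2e_s = 0$ and $2e_s + 2e_t = 0$ respectively, so both vanish in $(\mathbb{Z}/2\mathbb{Z})^{|S|}$ and $\phi$ is well defined on $W$.

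Once $\phi$ is in hand, the conclusion is immediate: for any expression representing $w$, the $s$-coordinate of $\phi(w)$ computes the parity of the number of appearances of $s$ in that expression. Since $\phi(w)$ depends only on the group element $w$, this parity is an invariant of $w$ rather than of the chosen expression, which is the content of the lemma.

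I do not anticipate a serious obstacle here, since the argument is really just the observation that parity of generator multiplicity factors through the mod-$2$ abelianization. The argument is also clearly special to the right-angled setting: if some $m_{st}$ were odd, then in the abelianization the relator $(st)^{m_{st}}$ would force $e_s = e_t$, collapsing the distinction between $s$ and $t$ and breaking the parity statement. Note in particular that the appeal to Tits' solution to the word problem in the paragraph preceding the lemma is not strictly necessary; it may be invoked as an alternative route by observing that the commutation moves $st \leftrightarrow ts$ between reduced expressions obviously preserve all parities, while any two (not necessarily reduced) expressions for $w$ differ by insertions and deletions of the relators $s^2$ and $(st)^2$, each of which changes the count of every generator by an even number.
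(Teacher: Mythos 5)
Your proof is correct, but it takes a genuinely different route from the paper. The paper does not give a standalone argument for this lemma: it derives it as an immediate consequence of Tits' solution to the word problem, i.e.\ the fact that any two reduced expressions for the same element of a right-angled Coxeter group are related by a sequence of swaps $st \leftrightarrow ts$ of commuting generators, each of which visibly preserves all multiplicities. Your primary argument instead packages the parity data into a homomorphism $\phi : W \to (\mathbb{Z}/2\mathbb{Z})^{|S|}$ and checks that the relators $s^2$ and $(st)^2$ die in the target. Both are valid; the trade-off is as follows. The homomorphism argument is more elementary (it needs nothing about Coxeter combinatorics) and it handles arbitrary, not necessarily reduced, expressions uniformly --- which is actually what the lemma asserts, whereas the version of Tits' theorem quoted in the paper only compares \emph{reduced} expressions, so the paper's one-line derivation implicitly also uses the deletion moves $ss \to 1$ (or a separate reduction step) to pass from an arbitrary expression to a reduced one. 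On the other hand, the Tits-based viewpoint is the one the paper continues to exploit in the subsequent cancellation arguments (the notions of $s$-minimal and $s$-good words), so the quoted theorem is not wasted even if, as you correctly observe, it is not strictly needed for this particular lemma. Your closing remark about why the statement fails when some $m_{st}$ is odd is accurate and is a nice sanity check on where right-angledness (or more generally, evenness of the exponents) enters.
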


Let $\mathcal{H}$ be the set of rank one elements in $W$. It is not difficult to find rank one elements in $W$. For example, any essential element in $W$ has virtually infinite cyclic centralizer, therefore it has rank one. (See \cite[Corollary 6.3.10]{Kra})

\begin{lemma}
Let $w$ be an element such that some (and hence any) reduced expression for $w$ has the following property : all generators appear, and each generator appears an odd number of times. Then $w$ is essential, and hence, $w$ has rank one.
\end{lemma}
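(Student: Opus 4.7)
The plan is to establish that $w$ is essential, i.e., that $\mathrm{Pc}(w) = W$; rank one then follows immediately from the discussion preceding the lemma, since in the infinite irreducible non-affine setting essential elements have virtually infinite cyclic centralizers and hence lie in $\mathcal{A}_{1}(W) \cap \mathcal{H}$.

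The key tool is the abelianization $\phi : W \to (\mathbb{Z}/2)^{S}$. Because the defining relations of a right-angled Coxeter group are $s^{2} = 1$ and, when $m_{ij} = 2$, $(s_{i} s_{j})^{2} = 1$, all of which become trivial modulo commutators, the map $\phi$ is surjective, and $\phi(w)$ is precisely the $|S|$-tuple recording, for each $s \in S$, the parity of the number of occurrences of $s$ in any reduced expression for $w$; this well-definedness is exactly the content of Lemma~\ref{word}. By hypothesis, every coordinate of $\phi(w)$ equals $1$.

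Now suppose for contradiction that $w$ lies in a proper parabolic subgroup $g W_{J} g^{-1}$ with $J \subsetneq S$ and $g \in W$. Then $g^{-1} w g \in W_{J}$, so every reduced expression for $g^{-1} w g$ involves only generators from $J$; hence the support of $\phi(g^{-1} w g)$ is contained in $J$. On the other hand, $\phi$ is conjugation-invariant, so $\phi(g^{-1} w g) = \phi(w)$ has all $|S|$ coordinates equal to $1$. This forces $J = S$, a contradiction, and therefore $\mathrm{Pc}(w) = W$. For the final step, I would note that $w$ must have infinite order, since any finite-order element of a Coxeter group is conjugate into a finite standard parabolic, and such a subgroup is proper in the infinite group $W$, contradicting essentiality. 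Combined with the results of Caprace--Fujiwara and Krammer cited above, this gives that $w$ is rank one.

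The main obstacle is essentially conceptual rather than technical: one must recognize that the parabolic-closure condition, which a priori involves conjugates of \emph{all} standard parabolics, becomes tractable precisely because $\phi$ is conjugation-invariant, so it suffices to compare supports of the two image vectors. Once this reduction is made, the proof collapses to a single-line observation about the support of $\phi(g^{-1}wg)$, and no further combinatorics of reduced expressions beyond Lemma~\ref{word} is needed.
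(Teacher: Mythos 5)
Your proof is correct and is essentially the same argument as the paper's: the paper shows directly that if $s \notin J$ then every expression for an element of $uW_{J}u^{-1}$ contains $s$ an even number of times and invokes the parity statement from Tits' word-problem solution, which is precisely your parity argument repackaged as the conjugation-invariant abelianization $\phi\colon W \to (\mathbb{Z}/2)^{S}$. No gaps; your homomorphism framing is merely a cleaner way of stating the identical support-and-parity comparison.
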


\begin{proof}
Suppose that $\mathrm{Pc}(w) = uW_{J}u^{-1}$ for some $u$ and some $J \subset S$. Suppose that $s \notin J$. Then any reduced expression for words in $uW_{J}u^{-1}$ contains $s$ an even number of times. Lemma \ref{word} gives contradiction and we can conclude that $s \in J$. This proves that $J=S$.
\end{proof}

\begin{proposition}\label{coxrk}
$r(W) \leq 1$.
\end{proposition}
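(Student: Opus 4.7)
The plan is to construct a finite cover of $W$ by left-translates of $\mathcal{A}_{1}(W)$ using a $\mathbb{Z}/2$-parity invariant coming from the right-angled structure. Concretely, I would produce one translate for each class in $(\mathbb{Z}/2)^{|S|}$, using the preceding lemma to locate an essential (hence rank one) element in each translate.

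First I would observe that in the right-angled case every defining relation of $W$, namely $s_i^2 = 1$ and $(s_i s_j)^2 = 1$ when $m_{ij} = 2$, contains each generator an even number of times. Hence the assignment $s_i \mapsto e_i$ extends to a well-defined surjective homomorphism $\epsilon \colon W \to (\mathbb{Z}/2)^{|S|}$; by Lemma \ref{word} the coordinate $\epsilon(w)_i$ is exactly the parity of the number of occurrences of $s_i$ in any reduced expression for $w$. Note this is exactly the point where the right-angled assumption is essential: were some $m_{ij}$ odd, the relation $(s_i s_j)^{m_{ij}} = 1$ would obstruct the homomorphism. The key consequence is that whenever $\epsilon(w) = (1, \ldots, 1)$, every generator appears in $w$ (an odd number of times), so the previous lemma forces $w$ to be essential, and then by the result of Caprace--Fujiwara recalled above $w$ is rank one, hence $\epsilon^{-1}((1,\ldots,1)) \subseteq \mathcal{H} \subseteq \mathcal{A}_{1}(W)$.

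Next I would use $\epsilon$ to transport every coset into $\epsilon^{-1}((1,\ldots,1))$. For each $v \in (\mathbb{Z}/2)^{|S|}$ choose a representative $g_v \in W$ with $\epsilon(g_v) = v + (1,\ldots,1)$; an explicit choice is $g_v = \prod_{i\colon v_i = 0} s_i$, and the surjectivity of $\epsilon$ guarantees existence in general. Then for any $w \in W$ with $\epsilon(w) = v$ we compute
$$\epsilon(g_v^{-1} w) = \epsilon(g_v) + \epsilon(w) = v + (1,\ldots,1) + v = (1,\ldots,1),$$
so $g_v^{-1} w \in \mathcal{A}_{1}(W)$, equivalently $w \in g_v \mathcal{A}_{1}(W)$. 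This exhibits the finite cover
$$W = \bigcup_{v \in (\mathbb{Z}/2)^{|S|}} g_v \mathcal{A}_{1}(W),$$
consisting of $2^{|S|}$ translates, which gives $r(W) \leq 1$ as desired.

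The only step with any subtlety is confirming that $\epsilon$ descends to a homomorphism, and I have isolated that above; the rest is bookkeeping on parities. It is worth remarking that the same proof produces a cover by translates of the strictly smaller set $\epsilon^{-1}((1,\ldots,1))$ of essential elements, and that the irreducible non-affine hypothesis enters the argument only through the cited rank-one property of essential elements.
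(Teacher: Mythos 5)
Your proof is correct and is essentially the paper's argument: the paper likewise left-multiplies a given element by the product of the generators appearing an even number of times in it, so as to land in the set of all-odd (hence essential, hence rank one) elements, and covers $W$ by the translates of $\mathcal{H}$ indexed by products of distinct generators. Your packaging of this parity bookkeeping as the homomorphism $\epsilon \colon W \to (\mathbb{Z}/2)^{|S|}$ is a tidier way to say the same thing, yielding the same bound of at most $2^{|S|}$ translates.
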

\begin{proof}
Recall $\mathcal{H} \subset \mathcal{A}_{1}(W)$. Let $\mathcal{S} =\{s_{1}\cdots s_{k} \,| \, s_{i} \in S, \mathrm{distinct}, k \leq n\}$, where $n=|S|$. In other words, $\mathcal{S}$ is the set of all possible products of distinct generators. We prove that for any element $t \in W \setminus \mathcal{H}$, there exists $g \in \mathcal{S}$ such that $gt \in \mathcal{H}$.

Let $t \in W \setminus \mathcal{H}$ be given and consider any reduced expression $\mathbf{t}$ for $t$. Multiply $\mathbf{t}$ by all generators appearing an even (including zero) number of times in $\mathbf{t}$.
$$(s_{i_{1}}\cdots s_{i_{n}})\mathbf{t}$$

Then the resulting word, and hence any reduced expression, has the property that all generators appear and each generator appears an odd number of times. Therefore, the element represented by this word is essential, and hence, it has rank one.

$$\displaystyle{W=\mathcal{H} \bigcup (\bigcup_{\alpha \in \mathcal{S}} \alpha \mathcal{H}}).$$

This proves that $r(W) \leq 1$.
\end{proof}


In order to prove that $\algrk(W) =1$, we need to show $r(T) \leq 1$ for any finite index subgroup $T$ of $W$. But it seems that the above argument does not work for $T$. Because, in the proof of Proposition \ref{coxrk}, $(s_{i_{1}}\cdots s_{i_{n}})$ does not necessarily represent an element in $T$. Once one takes powers on $(s_{i_{1}}\cdots s_{i_{n}})$ to get an element in $T$, the argument fails to apply. In particular, if the index $[W:T]$ is even, all generators in $(s_{i_{1}}\cdots s_{i_{n}})^{[W:T]}$ appear an even number of times. As an example, one can consider the commutator subgroup of $W$. Since the commutator subgroup misses all all-odd elements, it does not contain elements of type appeared in Lemma 7. Therefore, we take a different approach to prove $r(T) \leq 1$.

\begin{definition}
Let $\mathbf{w}$ be a reduced word in $S$. For any generator $s$ appearing in $\mathbf{w}$, let
$$\mathbf{w}=\mathbf{w_{0}}s \mathbf{w_{1}} s \cdots s \mathbf{w_{k}} s \mathbf{w_{k+1}},$$
where $\mathbf{w_{i}}$ does not contain $s$ for all $0 \leq i \leq k+1$. Note that $\mathbf{w_0}$ and $\mathbf{w_{k+1}}$ are allowed to be empty, but each $\mathbf{w_{i}} \neq \emptyset$ for $1 \leq i \leq k$.
\begin{enumerate}
\item $\mathbf{w}$ is said to be \emph{$s$-minimal} if each subword $\mathbf{w_{i}}$, for $1 \leq i \leq k$, contains a $s$-blocker, i.e., a generator $s' \in S$ such that $ss' \neq s's$. We consider $\mathbf{w}$ to be vacuously $s$-minimal if $s$ appears only once in $\mathbf{w}$.
\item $\mathbf{w}$ is said to be \emph{$s$-good} if $\mathbf{w}$ is $s$-minimal and $\mathbf{w_{k+1}w_{0}}$ contains a $s$-blocker for $k \geq 1$. In the case that $k=0$, $\mathbf{w}$ is considered to be $s$-good.
\end{enumerate}
\end{definition}

\begin{remark}
Any reduced word $\mathbf{w}$ is $s$-minimal for all generators $s$ appearing in $\mathbf{w}$. For a generator $s$ appearing in $\mathbf{w}$, let
$$\mathbf{w}=\mathbf{w_{0}}s \mathbf{w_{1}} s \cdots s \mathbf{w_{k}} s \mathbf{w_{k+1}}.$$
If $\mathbf{w_{i}}$ does not contain a $s$-blocker for some $1 \leq i \leq k$, then $s \mathbf{w_{i}} s = \mathbf{w_{i}}$, which is a contradiction.
\end{remark}

\begin{lemma}
Let $\mathbf{w}$ be a reduced word which is $s$-good for all $s \in S$. Then the element represented by $\mathbf{w}$ is essential.
\end{lemma}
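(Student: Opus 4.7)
The plan is to argue by contradiction. Suppose the element $w$ represented by $\mathbf{w}$ is not essential; then $\mathrm{Pc}(w) = uW_Ju^{-1}$ for some $u \in W$ and some proper subset $J \subsetneq S$. Since $\mathbf{w}$ is $s$-good for every $s \in S$, each generator appears in $\mathbf{w}$, so the support $S(\mathbf{w}) = S$.

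The first step is to show that $s$-goodness for all $s$ prevents any elementary conjugation $\mathbf{w} \mapsto t\mathbf{w}t$ from reducing word length. In a right-angled Coxeter group, since $\mathbf{w}$ is reduced, the only cancellations possible in $t\mathbf{w}t$ are of $t$-pairs brought together through commutations of non-$t$ generators; achieving $\ell(t\mathbf{w}t) < \ell(\mathbf{w})$ therefore requires cancelling at least two such pairs, which is possible only when $k \geq 1$ in the $t$-decomposition $\mathbf{w} = \mathbf{w_0} t \mathbf{w_1} t \cdots t \mathbf{w_k} t \mathbf{w_{k+1}}$. A case analysis on how the added $t$'s pair with interior $t$'s shows that the only viable pairing uses the first and last interior $t$-letters, which forces both $\mathbf{w_0}$ and $\mathbf{w_{k+1}}$ to commute with $t$; any other pairing would place some internal $\mathbf{w_i}$ ($1 \leq i \leq k$) between cancelling $t$'s, forcing $\mathbf{w_i}$ to commute with $t$ and contradicting $t$-minimality. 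The viable pairing, however, makes $\mathbf{w_{k+1}}\mathbf{w_0}$ contain no $t$-blocker, contradicting $t$-goodness. By the standard equivalence of local and global length-minimality in Coxeter groups (transparent in the right-angled case because reduced expressions differ only by commutations of commuting generators), $\mathbf{w}$ therefore represents a minimum-length element in its $W$-conjugacy class.

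The second step invokes Krammer's theorem on parabolic closures (\cite[Section 6]{Kra}, cf.\ \cite{CapFuj}): for an element $w$ of minimum length in its $W$-conjugacy class, the parabolic closure $\mathrm{Pc}(w)$ coincides with the standard parabolic $W_{S(w)}$ generated by the support of $w$. Applied to our $\mathbf{w}$, this yields $\mathrm{Pc}(w) = W_{S(\mathbf{w})} = W_S = W$, contradicting $\mathrm{Pc}(w) = uW_Ju^{-1}$ with $J \subsetneq S$. Hence $w$ is essential.

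The main obstacle is invoking Krammer's parabolic-closure theorem and the Geck-Pfeiffer-style equivalence of local and global length-minimality; both are standard results in Coxeter group theory, and both are especially clean for right-angled Coxeter groups via the heap/trace-monoid description of reduced expressions. The heart of the elementary argument is the pairing analysis in the first step, whose conclusion exactly matches the wrap-around clause of the $s$-good definition.
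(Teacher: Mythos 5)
Your step 1 is correct and is essentially a reformulation of the definition: for a generator $t$ with $k\ge 1$, conjugation by $t$ shortens $\mathbf{w}$ exactly when both $\mathbf{w_0}$ and $\mathbf{w_{k+1}}$ commute with $t$, which is precisely the failure of $t$-goodness; so $s$-goodness for all $s$ does show that no single elementary conjugation $\mathbf{w}\mapsto t\mathbf{w}t$ decreases length. The genuine gap is the bridge you build on top of this. The ``standard equivalence of local and global length-minimality'' is not a standard theorem in the form you use it: the Geck--Pfeiffer/He--Nie/Marquis results assert that from \emph{any} element there is a non-increasing chain of elementary conjugations reaching a minimal-length element of the conjugacy class. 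That statement does not rule out an element $w$ for which every single elementary conjugation preserves length but some chain of length-preserving conjugations leads to an element that \emph{can} then be shortened. To close the gap you would have to show that $s$-goodness for all $s$ (or at least local minimality) persists along length-preserving elementary conjugations, or else invoke the cyclic normal form theorem for graph products (a reduced word in a right-angled Coxeter group that cannot be shortened by conjugation by a single generator is cyclically reduced as a trace, and conjugate cyclically reduced traces are related by cyclic permutation, hence have equal length and equal support). Neither is done, and neither is ``transparent.'' A secondary concern is the attribution of ``$\mathrm{Pc}(w)=W_{S(w)}$ for $w$ of minimal length in its conjugacy class'' to Krammer: the result the paper actually uses from Krammer (Corollary 6.3.10) concerns centralizers of essential elements, and the parabolic-closure statement you need should be located and quoted precisely before being relied upon. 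Note also that the trace-monoid route, if completed, would give you $S(w)=S(w')$ for conjugate cyclically reduced elements, which yields the conclusion directly without any parabolic-closure theorem.

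By contrast, the paper's proof is entirely self-contained and works at the level of a single generator at a time: assuming the element lies in $u^{-1}W_Ju$ with $s\notin J$, it analyzes how the occurrences of $s$ in $\mathbf{u}\mathbf{w}\mathbf{u}^{-1}$ could cancel, and uses the wrap-around clause of $s$-goodness to produce a strictly growing chain of blockers $s, s_1, s_2,\dots$ whose cancellation is each a prerequisite for the previous one; finiteness of $S$ then forces a contradiction, so $s\in J$ for every $s$ and $J=S$. Your reduction to conjugacy-class minimality is an attractive alternative strategy, but as written it outsources the heart of the argument to results that are either misquoted or unverified, whereas the paper's descent argument needs only Tits' solution to the word problem.
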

\begin{proof}
Suppose that $s$ appears once in $\mathbf{w}$. In other words, $\mathbf{w} = \mathbf{w_{0}}s\mathbf{w_{1}}$.
Suppose the element represented by $\mathbf{w}$ is in $u^{-1}W_{J}u$ for some $u \in W$ and $J \subset S$. Then the element represented by $\mathbf{u}\mathbf{w}\mathbf{u}^{-1}$ lies in $W_{J}$, where $\mathbf{u}$ is any reduced expression for $u$. In some (any) reduced expression of $\mathbf{u}\mathbf{w}\mathbf{u}^{-1}$, $s$ appears an odd number of times. In particular, $s$ appears. Therefore, $s \in J$.

Suppose $s$ appears at least twice in $\mathbf{w}$. In other words,
$$\mathbf{w}=\mathbf{w_{0}}s \mathbf{w_{1}} s \cdots s \mathbf{w_{k}} s \mathbf{w_{k+1}},$$
for $k \geq 1$.
Suppose that the element represented by $\mathbf{w}$ is in $u^{-1}W_{J}u$ for some $u \in W$ and $J \subset S$. Then the element represented by 
$$\mathbf{u}\mathbf{w}\mathbf{u}^{-1} = \mathbf{u}\mathbf{w_{0}}s \mathbf{w_{1}} s \cdots s \mathbf{w_{k}} s \mathbf{w_{k+1}}\mathbf{u}^{-1}$$ 
lies in $W_{J}$, where $\mathbf{u}$ is any reduced expression of $u$. 

Assume that $s \notin J$. Then $\mathbf{u}$ must contain $s$. We prove that one of two $s$'s in $\mathbf{u}$ and $\mathbf{u}^{-1}$ cannot be cancelled off. By way of contradiction, let us assume that both can be cancelled.

Since $\mathbf{w}$ is $s$-good, there exists at least one $s$-blocker in $\mathbf{w_{0}}$ or $\mathbf{w_{k+1}}$. Without loss of generality, we assume that $s$-blocker lies in $\mathbf{w_{0}}$. (A symmetric argument applies if it lies in $\mathbf{w_{k+1}}$.) Take the first $s$-blocker in $\mathbf{w_0}$ and call it $s_1$. It follows that $\mathbf{u}$ must contain $s_1$ and the last $s_{1}$ occurs after the last $s$ in $\mathbf{u}$.
$$\mathbf{u}\mathbf{w}\mathbf{u}^{-1} = (\cdots s \cdots s_1 \cdots)(\cdots s_1 \cdots) s \mathbf{w_{1}} s \cdots s \mathbf{w_{k}} s \mathbf{w_{k+1}}(\cdots s_{1} \cdots s \cdots)$$
In order for the last $s$ to be cancelled off, $s_1$ must occur in $\mathbf{w_{k+1}}$.
$$\mathbf{u}\mathbf{w}\mathbf{u}^{-1} = (\cdots s \cdots s_1 \cdots)(\cdots s_1 \cdots) s \mathbf{w_{1}} s \cdots s \mathbf{w_{k}} s (\cdots s_1 \cdots)(\cdots s_{1} \cdots s \cdots),$$
where the $s_1 \in \mathbf{w_{k+1}}$ written above is the last occurrence of $s_1$ in $\mathbf{w_{k+1}}$. So before being able to cancel the first and the last $s$, we need to cancel out the intermediate blocker $s_1$. 
 
Now $\mathbf{w}$ is $s_1$-good. Therefore, there exists an $s_1$-blocker on the left of the first $s_1 \in \mathbf{w_{0}}$ or on the right of the last $s_1 \in \mathbf{w_{k+1}}$. Take the first $s_1$-blocker in $\mathbf{w_{0}}$ or the last $s_1$-blocker in $\mathbf{w_{k+1}}$ and call it $s_2$. Note that $s_2 \neq s$ and before canceling the $s_1$, we must first be able to cancel out the $s_1$-blocker $s_2$. As in the last paragraph, this forces $\mathbf{u}\mathbf{w}\mathbf{u}^{-1}$ to be of the form
$$(\cdots s_1 \cdots s_2 \cdots)(\cdots s_2 \cdots s_1 \cdots) s \mathbf{w_{1}} s \cdots s \mathbf{w_{k}} s (\cdots s_1 \cdots s_2 \cdots)(\cdots s_2 \cdots s_{1} \cdots)$$

Note that $\mathbf{w}$ is $s_2$-good, and hence, there exists an $s_2$-blocker $(\neq s, s_1)$ on the left of the first $s_2$ or on the right of the last $s_2$ in $\mathbf{w}$. But since $|S| < \infty$, this process must stop in finitely many stages, which proves that one of $s$'s in $\mathbf{u}$ and $\mathbf{u}^{-1}$ cannot be cancelled off. Therefore, $s \in J$. The element represented by $\mathbf{w}$ is essential.
\end{proof}

Let $T$ be a proper finite index subgroup of $W$. Assume that $T$ is normal and let $n=[W:T] \geq 2$. In order to prove that $r(T) \leq 1$, we need to consider two types of generators for a given reduced word $\mathbf{w}$ representing an element in $T$ : (1) a generator $s$ does not appear in $\mathbf{w}$ and (2) a generator $s$ appears, but $\mathbf{w}$ is not $s$-good. We begin with generators of type (1).

Let $\mathbf{w}$ be a reduced word in $S$ representing an element in $T$ and assume that $\mathbf{w}$ misses a generator $s$. Choose $s' \in S$ such that $ss' \neq s's$. Note that such $s'$ always exists because $W$ is infinite irreducible. Next choose $s'' \in S$ such that either $ss'' \neq s''s$ or $s's'' \neq s''s'$. Note that such $s''$ always exists, otherwise $W=W_{\{s,s'\}} \times W_{S\setminus\{s,s'\}}$, contradicting irreducibility (if $|S| >2$) or non-affine (if $|S| = 2$).

\begin{lemma}\label{coxlemma1}

\begin{enumerate}
\item Suppose that $ss''\neq s''s$. Then any reduced expression $\mathbf{r}$ of $(s''ss')^{n}\mathbf{w}$ has the following property:
\begin{enumerate}
\item The element represented by $\mathbf{r}$ is in $T$. This is obvious.
\item $\mathbf{r}$ is $s$-good.
\item $\mathbf{r}$ is $s'$-good.
\item For $t \neq s,s',s''$, if $\mathbf{w}$ is $t$-good, then $\mathbf{r}$ is also $t$-good.
\end{enumerate}
\item Suppose that $s's'' \neq s''s'$. Then any reduced expression $\mathbf{r'}$ of $(s's''ss's'')^{n}\mathbf{w}$ has the following property:
\begin{enumerate}
\item The element represented by $\mathbf{r'}$ is in $T$.
\item $\mathbf{r'}$ is $s$-good.
\item $\mathbf{r'}$ is $s''$-good.
\item For $t \neq s,s',s''$, if $\mathbf{w}$ is $t$-good, then $\mathbf{r'}$ is also $t$-good.
\end{enumerate}
\end{enumerate}
\end{lemma}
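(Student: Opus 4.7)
The plan is to treat both cases in parallel, verifying (a)--(d) for each. Part (a) is immediate: since $T$ is normal of index $n$, the quotient $W/T$ has order $n$, so every $n$-th power lies in $T$; hence $(s''ss')^n$ and $(s's''ss's'')^n$ both lie in $T$, and multiplying by $\mathbf{w}\in T$ keeps the product inside $T$.

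Two general facts about right-angled Coxeter groups underpin (b)--(d). First, since every $m_{ij}\in\{2,\infty\}$, the only braid moves are commutations, so by Tits--Matsumoto any two reduced expressions for the same element are related by a sequence of commutations of adjacent commuting generators. Consequently the multiset of letters in a reduced expression is an invariant of the element, and in any rewriting sequence of commutations and cancellations, any two surviving non-commuting letter-occurrences retain their relative left-to-right order. Second, by Remark 10 every reduced word is automatically $s$-minimal, so to show that a reduced expression $\mathbf{r}$ is $s$-good (and analogously $s'$-, $s''$-, or $t$-good) it suffices to exhibit a single $s$-blocker in $\mathbf{w_{k+1}w_0}$.

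For Case (1), I plan to identify two persistent ``shields'' in the initial word $(s''ss')^n\mathbf{w}$. First, no two of the $n$ copies of $s$ in $(s''ss')^n$ can ever be cancelled: each adjacent pair is separated by $s's''$, and since neither $s'$ nor $s''$ commutes with $s$ (by the standing choice of $s'$ and the Case (1) hypothesis), commutations can never bring two $s$'s adjacent. Hence $\mathbf{r}$ retains all $n\geq 2$ copies of $s$ so $k\geq 1$. The leftmost $s''$ then serves as the shield for (b): it precedes every $s$ in the initial word, cannot be overtaken (non-commutation with $s$), and cannot cancel with another $s''$ because every other $s''$ is flanked by an $s$ that $s''$ cannot cross. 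So the leftmost $s''$ persists in $\mathbf{w_0}$ as an $s$-blocker. Analogously for (c), the leftmost $s$ persists and must remain to the left of every $s'$, furnishing an $s'$-blocker in the outer region of $\mathbf{r}$'s $s'$-parsing.

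For (d), let $t\neq s,s',s''$ appear in $\mathbf{w}$ and let $b$ be a $t$-blocker in $\mathbf{w}$'s outer region; note $b\neq s$ since $\mathbf{w}$ omits $s$. If $b\notin\{s',s''\}$, then $b$ has no counterpart in $(s''ss')^n$ and so cannot cancel; as a $t$-blocker it cannot cross any $t$, and hence it stays in the corresponding outer region of $\mathbf{r}$. If $b\in\{s',s''\}$, the inner copies of $b$ in $(s''ss')^n$ are trapped between $s$'s that $b$ cannot cross, so only the boundary copy of $b=s'$ (the rightmost $s'$ in $(s''ss')^n$) can possibly cancel with $b$. Even if that single cancellation occurs, the remaining $n-1\geq 1$ copies of $b$ in $(s''ss')^n$ all lie to the left of the first $t$ in $\mathbf{r}$ and cannot cross any $t$, so they supply the required blocker in the outer region of the $t$-parsing. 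Case (2) is handled symmetrically with $(s's''ss's'')^n$, using $s's''\neq s''s'$ to make $s'$ the persistent $s''$-blocker surrounding each $s''$ and the persistent $s$-blocker to the left of each $s$. The main obstacle throughout is cancellation control: exhibiting a blocker in the initial word is trivial, but ruling out that every candidate blocker gets annihilated during reduction requires the uniform observation that every candidate is flanked in the initial word by impassable $s$'s (or $s'$'s in Case (2)) that it cannot commute past.
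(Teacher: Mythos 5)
Your proof is correct and follows essentially the same route as the paper's: the decisive point in both is that, because $\mathbf{w}$ omits $s$, every copy of $s$ in the prefix survives reduction and, not commuting with $s'$ or $s''$, shields the designated blockers so that they persist in the outer regions of the relevant parsings. Your write-up is in fact somewhat more explicit than the paper's terse argument about exactly which letter-occurrences survive and why at most one prefix copy of a blocker can cancel into $\mathbf{w}$.
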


\begin{proof} 
We prove the first statement only. The second statement can be proved by exactly the same argument as the first.

Consider 
$$(s''ss')^{n}\mathbf{w}=(s''ss')(s''ss')\cdots(s''ss')\mathbf{w}=(s'')s(s's'')s(s'\cdots s'')s(s'\mathbf{w}).$$
Since $ss' \neq s's$, $ss''\neq s''s$, $s'$ or $s''$ before the last occurrence of $s$ cannot be cancelled. Since $s$ does not appear in $\mathbf{w}$, $\mathbf{r}$ is $s$-good.
Let $\mathbf{w}=\mathbf{w'_{0}}s'\mathbf{w'_{1}}s'\cdots \mathbf{w'_{l}}s' \mathbf{w'_{l+1}}$ and consider
$$(s''ss')^{n}\mathbf{w} = (s''s)s'(s''s)s'\cdots s'(s''s)s'\mathbf{w'_{0}}s'\mathbf{w'_{1}}s'\cdots \mathbf{w'_{l}}s' \mathbf{w'_{l+1}}.$$
Note that if the subword $\mathbf{w'_{0}}$ does not contain $s'$-blocker, then $s'\mathbf{w'_{0}}s'$ is reduced to $\mathbf{w'_{0}}$. But $s$ is an $s'$-blocker. Therefore, there exists at least one $s'$-blocker in the reduced expression of $(s''ss')\mathbf{w'_{0}}s'\mathbf{w'_{1}}$, even if $\mathbf{w'_{0}}$ does not contain $s'$-blocker. (Recall that the letter $s$ does not appear in $\mathbf{w'_{0}}$ and $\mathbf{w'_{1}}$, so it cannot be cancelled off.)

Suppose that $\mathbf{w}$ is $t$-good for $t \neq s,s',s''$. If neither $s,s'$ or $s''$ is $t$-blocker, then $\mathbf{r}$ is obviously $t$-good. Consider the case that either $s,s'$ or $s''$ is a $t$-blocker.
$$(s''ss')^{n}\mathbf{w}=(s''ss')(s''ss')\cdots(s''ss')\mathbf{w''_{0}}t\mathbf{w''_{1}}t\cdots \mathbf{w''_{m}}t \mathbf{w''_{m+1}}$$
Note that $s$ is not in $\mathbf{w}$ and $n \geq 2$. Therefore, there exists at least one $t$-blocker in the reduced expression of $(s''ss')(s''ss')\cdots(s''ss')\mathbf{w''_{0}}$. It follows that $\mathbf{r}$ is $t$-good.
\end{proof}

\begin{remark}\label{allgen}
\begin{enumerate}
\item In Lemma \ref{coxlemma1}, $\mathbf{r}$ is not necessarily $s''$-good. Similarly, $\mathbf{r'}$ is not necessarily $s'$-good. Therefore, the number of good generators of $\mathbf{r}$ or $\mathbf{r'}$ might be equal to the number of good generators of $\mathbf{w}$. But note that $s$ appears in $\mathbf{r}$ and $\mathbf{r'}$, and all letters appearing in $\mathbf{w}$ still appear in $\mathbf{r}$ and $\mathbf{r'}$.
\item Let $\mathbf{w}$ be a reduced word in $S$ representing an element in $T$. By multiplying words as in Lemma \ref{coxlemma1}, we can obtain $\mathbf{w':=y_{k}y_{k-1}\cdots y_{1}w}$ such that the element represented by $\mathbf{w'}$ is in $T$ and all generators appear in $\mathbf{w'}$.
\item There exists a finite set $\mathcal{R}$ of words such that for any reduced word $\mathbf{w}$ representing an element in $T$, there exists some $\mathbf{r} \in \mathcal{R}$ for which $\mathbf{rw}$ represents an element in $T$ and all generators appear in $\mathbf{rw}$ .
\end{enumerate}   
\end{remark}

Next, we consider generators of type (2).

\begin{definition}
Let $\mathbf{w}$ be a reduced word in $S$ such that all generators appear. Define $B(\mathbf{w})$ be the set of generators for which $\mathbf{w}$ is not good, i.e., $B(\mathbf{w})$ consists of all the ``bad" generators. For $B \subset S$, a word $\mathbf{v}$ is called a \emph{$B$-cancellator} if, for any $\mathbf{w}$ such that $B=B(\mathbf{w})$, any reduced expression of $\mathbf{v}\mathbf{w}$ is $s$-good for all $s \in S$.
\end{definition}

The following lemma tells us that $B$-cancellators exist and can be chosen to represent an element in $T$. Note that there are only finitely many subsets of $S$. Therefore, we can form finitely many cancellators.

Let $\mathbf{w}$ be a reduced word in $S$ such that all generators appear and $\mathbf{w}$ represents an element in $T$. Suppose that $\mathbf{w}$ is not $s$-good. Choose $s' \in S$ such that $ss' \neq s's$. Note that such $s'$ always exists because $W$ is infinite irreducible. Also we choose $s'' \in S$ such that either $ss'' \neq s''s$ or $s's'' \neq s''s'$. Note that such $s''$ always exists, otherwise $W=W_{\{s,s'\}} \times W_{S\setminus\{s,s'\}}$.

\begin{lemma}\label{coxlemma2}
\begin{enumerate}
\item Suppose that $ss''\neq s''s$. Then any reduced expression $\mathbf{r}$ of $(s''ss')^{n}\mathbf{w}$ has the following property:
\begin{enumerate}
\item The element represented by $\mathbf{r}$ is in $T$. This is obvious.
\item $\mathbf{r}$ is $s$-good.
\item $\mathbf{r}$ is $s'$-good.
\item $\mathbf{r}$ is $s''$-good.
\item For $t \neq s,s',s''$, if $\mathbf{w}$ is $t$-good, then $\mathbf{r}$ is also $t$-good.
\end{enumerate}
\item Suppose that $s's'' \neq s''s'$. Then any reduced expression $\mathbf{r'}$ of $(s's''ss's'')^{n}\mathbf{w}$ has the following property:
\begin{enumerate}
\item The element represented by $\mathbf{r'}$ is in $T$.
\item $\mathbf{r'}$ is $s$-good.
\item $\mathbf{r'}$ is $s'$-good.
\item $\mathbf{r'}$ is $s''$-good.
\item For $t \neq s,s',s''$, if $\mathbf{w}$ is $t$-good, then $\mathbf{r'}$ is also $t$-good.
\end{enumerate}
\end{enumerate}
\end{lemma}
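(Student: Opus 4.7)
The plan is to adapt the proof of Lemma \ref{coxlemma1} to this new setting, where $s$ does appear in $\mathbf{w}$ but $\mathbf{w}$ fails to be $s$-good. Since any reduced word is automatically $s$-minimal (by the remark following the definition of $s$-minimal), the failure of $\mathbf{w}$ to be $s$-good means that in the $s$-decomposition $\mathbf{w} = \mathbf{w_0}s\mathbf{w_1}s\cdots s\mathbf{w_k}s\mathbf{w_{k+1}}$ we have $k \geq 1$ and the wrap-around subword $\mathbf{w_{k+1}w_0}$ contains no $s$-blocker. I would prove only part (1); part (2) follows by an entirely parallel argument using the non-commutation $s's'' \neq s''s'$.

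Part (a) is immediate: $T$ is normal of index $n$ in $W$, so the image of $s''ss'$ in the finite quotient $W/T$ has order dividing $n$ by Lagrange, whence $(s''ss')^n \in T$, and $(s''ss')^n\mathbf{w}$ represents an element of $T$.

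For parts (b), (c), and (d), I would analyze
$$(s''ss')^n\mathbf{w} = (s''ss')(s''ss')\cdots(s''ss')\mathbf{w},$$
whose prefix uses only letters from $\{s, s', s''\}$ and contains exactly $n$ copies of each. Under the hypotheses $ss'\neq s's$ and $ss''\neq s''s$, both $s'$ and $s''$ are $s$-blockers, and $s$ is simultaneously an $s'$-blocker and an $s''$-blocker. So within the prefix, consecutive occurrences of any $x \in \{s, s', s''\}$ are separated by at least one $x$-blocker. Since for a right-angled Coxeter group Tits' theorem gives that all reduced-word moves are commutation moves, no two copies of $x$ inside the prefix can cancel against each other. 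Using this, I would show that in any reduced expression $\mathbf{r}$ of the product, the initial $s''$ and the terminal $s'$ of the prefix both survive and lie in the wrap-around region of the $s$-decomposition of $\mathbf{r}$: commuting either of them past the first or last $s$ of the word would require swapping a non-commuting pair, which is forbidden. This gives $s$-goodness of $\mathbf{r}$. By rotating the roles of $s, s', s''$, surviving prefix $s$'s serve as $s'$-blockers (respectively $s''$-blockers) in the wrap-around regions of the $s'$- and $s''$-decompositions of $\mathbf{r}$, yielding $s'$- and $s''$-goodness.

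For part (e), let $t \neq s, s', s''$ with $\mathbf{w}$ being $t$-good. If none of $s, s', s''$ is a $t$-blocker, then every prefix letter commutes with $t$, so the $t$-decomposition of $\mathbf{r}$ inherits its wrap-around blocker from that of $\mathbf{w}$, and $\mathbf{r}$ is $t$-good. If at least one of $s, s', s''$ is a $t$-blocker, then since $n \geq 2$ the prefix places at least two copies of this $t$-blocker entirely to the left of the first $t$ of $\mathbf{w}$, and the blocker-chase argument from the proof of Lemma \ref{coxlemma1}(1)(d) shows that at least one such blocker survives in the wrap-around of the $t$-decomposition of $\mathbf{r}$. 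The main obstacle throughout is the rigorous non-cancellation bookkeeping in parts (b)--(d): the assertion that prefix letters cannot cross a non-commuting $s$ is intuitively clear but must be made precise, and the right tool is a recursive blocker-chase as in the proof of Lemma 10 --- to cancel any putative problem letter, one must first cancel its flanking blockers, which themselves have flanking blockers, and so on; because $|S|$ is finite, this cascade terminates in a contradiction.
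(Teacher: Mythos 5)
Your proposal follows the paper's proof quite closely: multiply by the prefix $(s''ss')^n$, use the failure of $s$-goodness of $\mathbf{w}$ (the wrap-around $\mathbf{w_{k+1}}\mathbf{w_0}$ has no $s$-blocker, hence contains neither $s'$ nor $s''$) to locate the relevant first and last occurrences, check that the planted blockers land in the wrap-around of each decomposition, and handle part (e) exactly as in Lemma \ref{coxlemma1}. Your explicit appeal to the recursive blocker-chase of Lemma 10 for the non-cancellation bookkeeping is, if anything, more careful than the paper, which asserts the survival of the prefix letters rather tersely.

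There is, however, one step where your stated mechanism would fail as written: the claim that for $s''$-goodness a \emph{surviving prefix $s$} serves as the $s''$-blocker in the wrap-around of the $s''$-decomposition of $\mathbf{r}$. Since all generators appear in $\mathbf{w}$ (this is the standing hypothesis before Lemma \ref{coxlemma2}), the last $s''$ of $\mathbf{r}$ lies in $\mathbf{w}$, so the tail of the $s''$-wrap-around is a suffix of $\mathbf{w}$ and contains no prefix letters at all; the head of the wrap-around is empty because $s''$ is the first letter of $\mathbf{r}$. The paper's argument at this point is different and uses the hypothesis once more: since $s''$ is an $s$-blocker and the wrap-around of the $s$-decomposition of $\mathbf{w}$ has no $s$-blocker, the last $s''$ of $\mathbf{w}$ must precede the last $s$ of $\mathbf{w}$, so the segment after the last $s''$ contains an occurrence of $s$ coming from $\mathbf{w}$ itself, and that $s$ is the needed $s''$-blocker. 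You have all the ingredients to make this repair (you state the no-blocker property of the wrap-around of $\mathbf{w}$ at the outset), but the ``rotate the roles'' gloss does not actually cover this case. A similar, more cosmetic, slip: the terminal $s'$ of the prefix does not lie in the wrap-around of the $s$-decomposition of $\mathbf{r}$ (it sits between the last prefix $s$ and the first $s$ of $\mathbf{w}$); only the initial $s''$ does, and it alone carries the $s$-goodness. Neither issue requires a new idea, but the $s''$ case is a genuine gap in the argument as proposed.
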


\begin{proof}
Again, we prove the first statement only. The second statement can be proved by exactly the same argument as the first.

Let $\mathbf{w} = \mathbf{w_{0}}s \mathbf{w_{1}} s \cdots s \mathbf{w_{k}} s \mathbf{w_{k+1}}$ and consider
$$(s''ss')^{n}\mathbf{w} = (s'')s(s's'')\cdots(s's'')s(s'\mathbf{w_{0}})s \mathbf{w_{1}} s \cdots s \mathbf{w_{k}} s \mathbf{w_{k+1}}.$$
In the reduced expression of $ss'\mathbf{w_{0}}s$, since $s$ and $s'$ don't commute, $s'$ is $s$-blocker. (Note that $\mathbf{w_{0}}$ does not contain $s'$, since, by assumption, $s \in B(\mathbf{w}))$. Also the first $s''$ is an $s$-blocker. Hence $\mathbf{r}$ is $s$-good.

$$(s''ss')^{n}\mathbf{w}=(s''s)s'(s''s)s'\cdots(s''s)s'\mathbf{w'_{0}}s'\mathbf{w'_{1}}s'\cdots \mathbf{w'_{m}}s' \mathbf{w'_{j+1}}$$
Secondly, note that $s$ and $s'$ don't commute. Since $\mathbf{w}$ is not $s$-good, the first $s'$ in $\mathbf{w}$ should occur after the first $s$ in $\mathbf{w}$, i.e., $s \in \mathbf{w'_{0}}$. It follows that $s$ is an $s'$-blocker in $\mathbf{w'_{0}}$. Hence $\mathbf{r}$ is $s'$-good.

$$(s''ss')^{n}\mathbf{w}=s''(ss')s''(ss')\cdots s''(ss'\mathbf{w''_{0}})s''\mathbf{w''_{1}}s''\cdots \mathbf{w''_{i}}s'' \mathbf{w''_{i+1}}$$

Note that $s$ and $s''$ don't commute. Since $\mathbf{w}$ is not $s$-good, the last $s''$ in $\mathbf{w}$ should occur before the last $s$ in $\mathbf{w}$, i.e., $s \in \mathbf{w''_{i+1}}$. It follows that $\mathbf{w''_{i+1}}$ contains an $s''$-blocker $s$. $\mathbf{r}$ is $s''$-good.

Suppose that $\mathbf{w}$ is $t$-good for $t \neq s, s', s''$. If neither $s,s'$ or $s''$ is $t$-blocker, then $\mathbf{r}$ is $t$-good. Let 
$$(s''ss')^{n}\mathbf{w}=(s''ss')(s''ss')\cdots(s''ss')\mathbf{w'''_{0}}t\mathbf{w'''_{1}}t\cdots \mathbf{w'''_{h}}t \mathbf{w'''_{h+1}}.$$

If $s$ is a $t$-blocker, the first $t$ occur after the first $s$ in $\mathbf{w}$, i.e., $s \in \mathbf{w'''_{0}}$. Furthermore, this $s$ cannot be cancelled off, because $s$ and $s'$ don't commute. It follows that there exists at least one $t$-blocker in the reduced expression of $(s''ss')\mathbf{w'''_{0}}$. $\mathbf{r}$ is $t$-good. Next, suppose that $s'$ or $s''$ is $t$-blocker. The $s$ in the last $(s''ss')$ cannot be cancelled off. Therefore, there exists at least one $t$-blocker in the reduced expression of $(s''ss')(s''ss')\cdots(s''ss')\mathbf{w'''_{0}}$. $\mathbf{r}$ is $t$-good.
\end{proof}

\begin{corollary}\label{cancelator}
For a given $B \subset S$, a $B$-cancellator $\mathbf{v_{B}}$ exists and can be chosen to represent an element in $T$.
\end{corollary}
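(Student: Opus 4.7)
The plan is to build $\mathbf{v_B}$ by chaining together the local fixers supplied by Lemma \ref{coxlemma2}, one for each element of $B$, so that left-multiplication by the resulting word forces every generator of $S$ to become good.

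Enumerate $B=\{t_1,t_2,\ldots,t_k\}$. For each $t_i \in B$, Lemma \ref{coxlemma2} associates auxiliary generators $s'_i, s''_i \in S$, depending only on $t_i$ and the defining relations of $W$, together with a fixer word
$$\mathbf{y_i} \;=\; (s''_i t_i s'_i)^n \quad \text{or} \quad \mathbf{y_i} \;=\; (s'_i s''_i t_i s'_i s''_i)^n,$$
with $n=[W:T]$, according to which case of the lemma applies. Since $T$ is normal of index $n$ in $W$, every $n$-th power of an element of $W$ lies in $T$, so each $\mathbf{y_i}$ represents an element of $T$. Set
$$\mathbf{v_B} \;:=\; \mathbf{y_k}\mathbf{y_{k-1}}\cdots\mathbf{y_1},$$
which, as a product of elements of $T$, lies in $T$.

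Given any reduced $\mathbf{w}$ with $B(\mathbf{w})=B$, write $\mathbf{w_i}$ for the reduced form of $\mathbf{y_i}\cdots\mathbf{y_1}\mathbf{w}$. I would argue by induction on $i$ that the set of generators at which $\mathbf{w_i}$ is good contains $(S\setminus B)\cup\{t_j,s'_j,s''_j:1\le j\le i\}$. The base case $i=0$ is precisely the hypothesis $B(\mathbf{w})=B$; for the inductive step, Lemma \ref{coxlemma2} applied to $\mathbf{w_{i-1}}$ with $s=t_i$ adds $t_i, s'_i, s''_i$ to the good set and preserves goodness of every other generator already good in $\mathbf{w_{i-1}}$. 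After $k$ applications the good set contains $(S\setminus B)\cup B=S$, so $\mathbf{v_B w}$ is $s$-good for every $s \in S$, as required.

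The principal obstacle is justifying Lemma \ref{coxlemma2}'s applicability at intermediate stages: the lemma is stated under the hypothesis that the word is not $s$-good, whereas $\mathbf{w_{i-1}}$ may already be $t_i$-good whenever $t_i$ has been incidentally fixed by some earlier $\mathbf{y_j}$ with $t_i\in\{s'_j,s''_j\}$. However, the proof of Lemma \ref{coxlemma2} rests only on the placement of the blockers $s'_i,s''_i$ framing each $t_i$-letter introduced by $\mathbf{y_i}$, and those blockers survive the reduction regardless of the prior $t_i$-structure of $\mathbf{w_{i-1}}$; so the argument adapts uniformly to yield the inductive step in all cases. One should also verify that every generator of $S$ actually appears in $\mathbf{v_B w}$ so that $s$-goodness is non-vacuous for each $s$, which follows from Lemma \ref{word} --- generators appearing with odd multiplicity cannot vanish under reduction --- with any deficient parity handled by prepending to $\mathbf{v_B}$ a fixed universal word depending only on $W$ and $T$.
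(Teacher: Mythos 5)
Your overall strategy --- iterate Lemma \ref{coxlemma2} once per bad generator and track the growing set of good generators --- is the same induction on $|B|$ that the paper uses, but you set it up non-adaptively, and this creates a genuine gap that you notice but do not close. You fix the entire sequence of fixer words $\mathbf{y_k},\dots,\mathbf{y_1}$ in advance from the initial set $B$, and then must apply Lemma \ref{coxlemma2} to $\mathbf{w_{i-1}}$ with $s=t_i$ even when $\mathbf{w_{i-1}}$ is already $t_i$-good. The lemma's hypothesis that the word is \emph{not} $s$-good is not cosmetic: its proof uses it essentially to place $s$ inside the leading $s'$-block (``the first $s'$ in $\mathbf{w}$ should occur after the first $s$'') and inside the trailing $s''$-block, which is exactly what makes the output $s'$-good and $s''$-good, and it is invoked again in the preservation clause for the other generators $t$. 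Your one-sentence claim that ``the argument adapts uniformly'' because the new blockers ``survive the reduction regardless of the prior $t_i$-structure'' is an assertion, not a proof, and it is precisely the assertion that would require a new argument; as written, the inductive step is unjustified whenever $t_i$ has been incidentally repaired by an earlier $\mathbf{y_j}$ with $t_i\in\{s'_j,s''_j\}$.

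The paper avoids this entirely by being adaptive: after each multiplication it passes to a reduced expression $\mathbf{r}_i$, observes from Lemma \ref{coxlemma2} that $B(\mathbf{r}_i)\subset B(\mathbf{r}_{i-1})$ with strictly smaller cardinality, and chooses the next generator from the \emph{current} bad set $B(\mathbf{r}_i)$, so the lemma's hypothesis holds at every stage and the process stops after at most $|B|$ steps. If you want to keep the non-adaptive construction (which has the merit of visibly producing a word depending only on $B$, as the definition of a $B$-cancellator demands), you must either prove a variant of Lemma \ref{coxlemma2} that drops the ``not $s$-good'' hypothesis or reorganize the induction as the paper does. Your closing worry about generators failing to appear is unnecessary here: $B(\mathbf{w})$ is only defined for words in which every generator already appears, and that reduction is performed beforehand via Lemma \ref{coxlemma1} and Remark \ref{allgen}.
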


\begin{proof}
Let $\mathbf{w}$ be a reduced word in $S$, with all generators appearing, such that $B(\mathbf{w})=B$. Choose a generator $s_{1} \in B(\mathbf{w})$. Apply the lemma to obtain a word $\mathbf{v_{1}}$ representing an element in $T$ such that any reduced expression $\mathbf{r}_{1}$ of $\mathbf{v_{1}w}$ is $s_{1}$-good. Consider $\mathbf{r}_{1}$. From Lemma \ref{coxlemma2}, $B(\mathbf{r}_{1}) \subset B(\mathbf{w})$ and $|B(\mathbf{r}_{1})| < |B(\mathbf{w})|$. Choose a generator $s_{2} \in B(\mathbf{r}_{1})$ and apply the lemma to obtain a word $\mathbf{v_{2}}$ representing an element in $T$ such that any reduced expression $\mathbf{r}_{2}$ of $\mathbf{v_{2}}\mathbf{r}_{1}$ is $s_{2}$-good. Continuing this process, at most $|B(\mathbf{w})|$ number of times, we obtain a word $\mathbf{v_{k}}\mathbf{v_{k-1}}\cdots \mathbf{v_{1}} \mathbf{w}, k \leq |B(\mathbf{w})|$ whose reduced expression is $s$-good for all $s \in S$. Such a $\mathbf{v_{k}}\mathbf{v_{k-1}}\cdots \mathbf{v_{1}}$ is the desired $B$-cancellator.
\end{proof}

\begin{corollary}\label{norcox}
$r(T) \leq 1$.
\end{corollary}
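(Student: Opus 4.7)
The plan is to reduce every $t\in T$ to a rank one element of $W$ via left multiplication by one of finitely many explicit elements of $T$, and then to observe that every rank one element of $W$ lying in $T$ belongs to $\mathcal{A}_{1}(T)$.

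Given $t\in T$, I would begin with any reduced expression $\mathbf{w}$ for $t$. By Remark \ref{allgen}(3), there exists a finite set $\mathcal{R}$ of words -- each representing an element of $T$, since the multipliers furnished by Lemma \ref{coxlemma1} are $n$-th powers with $n=[W:T]$ and $T$ is normal -- and an $\mathbf{r}\in\mathcal{R}$ such that the reduced form $\mathbf{w}'$ of $\mathbf{r}\mathbf{w}$ uses every generator in $S$. Setting $B:=B(\mathbf{w}')\subseteq S$, Corollary \ref{cancelator} produces a $B$-cancellator $\mathbf{v_{B}}$ which also represents an element of $T$. Then any reduced expression of $\mathbf{v_{B}}\mathbf{w}'$ is $s$-good for every $s\in S$, so by the preceding lemma on all-good words it represents an essential, and hence rank one, element of $W$. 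Consequently $\mathbf{v_{B}}\mathbf{w}'$ lies in $\mathcal{H}\subseteq\mathcal{A}_{1}(W)$.

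To transfer this to $\mathcal{A}_{1}(T)$, I would note that for any $h\in\mathcal{H}$ the centralizer $C_{W}(h)$ is virtually infinite cyclic by Caprace--Fujiwara \cite{CapFuj}; since $T$ has finite index in $W$, the intersection $C_{T}(h)=C_{W}(h)\cap T$ has finite index in $C_{W}(h)$ and is therefore itself virtually infinite cyclic, so $h\in\mathcal{A}_{1}(T)$. Writing $g_{B,\mathbf{r}}:=\mathbf{v_{B}}\cdot\mathbf{r}\in T$, the element represented by $\mathbf{v_{B}}\mathbf{w}'$ equals $g_{B,\mathbf{r}}\cdot t$, hence $t\in g_{B,\mathbf{r}}^{-1}\mathcal{A}_{1}(T)$.

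Because $\mathcal{R}$ is finite and there are only $2^{|S|}$ choices for $B$, the multipliers $g_{B,\mathbf{r}}^{-1}$ range over a finite subset of $T$ independent of $t$, and the corresponding translates of $\mathcal{A}_{1}(T)$ cover $T$; this yields $r(T)\leq 1$. The only real subtlety is bookkeeping, namely verifying that both the cleanup words of Remark \ref{allgen} and the cancellators of Corollary \ref{cancelator} can be chosen inside $T$ and that together they handle every reduced word representing an element of $T$. Both points follow from the fact that each multiplier used is an $n$-th power and $T$ is normal of index $n$ in $W$, so no new idea beyond Lemmas \ref{coxlemma1} and \ref{coxlemma2} is required.
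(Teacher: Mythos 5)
Your proposal is correct and follows essentially the same route as the paper: reduce an arbitrary $t\in T$ to an all-good (hence essential, hence rank one) element by left-multiplying first by a cleanup word from Remark \ref{allgen} and then by a $B$-cancellator from Corollary \ref{cancelator}, both chosen in $T$, and then cover $T$ by the finitely many translates $\mathbf{r}^{-1}\mathbf{v_B}^{-1}\mathcal{H}_{T}$ with $\mathcal{H}_{T}\subset\mathcal{A}_{1}(T)$. Your extra bookkeeping (that the multipliers are $n$-th powers, hence in the normal subgroup $T$, and that $C_{T}(h)$ is virtually infinite cyclic) just makes explicit what the paper leaves implicit.
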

\begin{proof}
Let $\mathcal{H}_{T} = \mathcal{H} \bigcap T$. For $g \in \mathcal{H}_{T}$, the centralizer $C_{W}(g)$ in $W$ is virtually infinite cyclic, and hence, $C_{T}(g)$ is also virtually infinite cyclic. It follows that $\mathcal{H}_{T} \subset \mathcal{A}_{1}(T)$. 

Let $g' \in T \setminus \mathcal{H}_{T}$ be given. By Remark \ref{allgen} and Corollary \ref{cancelator}, we can find a $\mathbf{r} \in \mathcal{R}$ and a cancellator $\mathbf{v}$ such that any reduced expression of $\mathbf{v}\mathbf{rw'}$ is $s$-good for all $s \in S$, where $\mathbf{w'}$ is any reduced expression for $g'$. Note that $\mathbf{v}\mathbf{rw'}$ represents an element in $\mathcal{H}_{T}$. Therefore,

$$T = \bigcup_{B \subseteq S, \mathbf{r} \in \mathcal{R}} \mathbf{r}^{-1}\mathbf{v_B}^{-1}\mathcal{H}_{T},$$
where each $\mathbf{v}_{B}$ is a $B$-cancellator.
\end{proof}

\begin{proposition}\label{cox}
Let $W$ be an infinite, irreducible, and non-affine right-angled Coxeter group. Then $\algrk(W)=1$.
\end{proposition}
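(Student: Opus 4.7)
The plan is to assemble the tools built up through this section. By definition, $\algrk(W) = \sup_{G^{*}} r(G^{*})$ where $G^{*}$ ranges over finite index subgroups of $W$, so I aim to prove $\algrk(W) = 1$ by exhibiting a finite index subgroup with $r \geq 1$ (giving $\algrk(W) \geq 1$) and bounding $r(G^{*}) \leq 1$ uniformly in $G^{*}$ (giving $\algrk(W) \leq 1$).

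The lower bound is immediate from Selberg's lemma, as already noted in the remark following Proposition \ref{algrkpro}: $W$ admits a torsion-free finite index subgroup $T_{0}$, and since $T_{0}$ is infinite and torsion-free, $\mathcal{A}_{0}(T_{0}) = \{e\}$, so $T_{0}$ cannot be covered by finitely many translates of $\mathcal{A}_{0}(T_{0})$. Hence $r(T_{0}) \geq 1$, whence $\algrk(W) \geq 1$.

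For the upper bound, Proposition \ref{coxrk} already handles the case $G^{*} = W$. For a proper finite index $G^{*} \leq W$, the standard trick is to pass to $T = \bigcap_{g \in W} g G^{*} g^{-1}$, which is a normal finite index subgroup of $W$ contained in $G^{*}$ (the intersection is finite since $G^{*}$ has only finitely many conjugates in $W$). Corollary \ref{norcox} then gives $r(T) \leq 1$, and since $T$ has finite index in $G^{*}$, Proposition \ref{algrkpro}(1) yields $r(G^{*}) \leq r(T) \leq 1$. Combining the two bounds proves $\algrk(W) = 1$. The serious work has already been carried out in Corollary \ref{norcox} via the $B$-cancellator construction; the present proposition is essentially a packaging statement, and I do not anticipate any genuinely new obstacle beyond the reduction from arbitrary finite index subgroups to normal ones.
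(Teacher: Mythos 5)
Your proposal is correct and follows essentially the same route as the paper: reduce to the normal core via Proposition \ref{algrkpro}(1), apply Corollary \ref{norcox} to the normal finite index subgroup, and use Proposition \ref{coxrk} for $W$ itself (your explicit lower bound via Selberg's lemma is already recorded in the paper's earlier remark, modulo the tiny point that $\mathcal{A}_{0}(T_{0})$ is actually empty rather than $\{e\}$, since the centralizer of the identity is all of $T_{0}$ --- which only strengthens your conclusion).
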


\begin{proof}
By Proposition \ref{coxrk}, it suffices to show that $r(W') \leq 1$ for any finite index subgroup $W'$ of $W$. By taking the normal core, we obtain a finite index normal subgroup $W''$ of $W$ and, by Corollary \ref{norcox}, $r(W'') \leq 1$. Finally, Proposition \ref{algrkpro} implies that $r(W') \leq 1$.
\end{proof}


We close the subsection by introducing two corollaries of Proposition \ref{cox}. Two groups $G_1$ and $G_2$ are said to be \emph{commensurable} if there exist $H_{i}$, for $i=1,2$, such that $[G_{i}:H_{i}] < \infty $ or $[H_{i}:G_{i}] < \infty$ and $H_{1}$ is isomorphic to $H_{2}$. Davis proved that any infinite irreducible non-affine Coxeter group $W$ cannot be a uniform lattice $\Gamma$ in a higher rank non-compact connected semi-simple Lie group (See \cite[Corollary 10.9.8]{DavBOOK}) and conjectured $W$ cannot be commensurable to $\Gamma$. (See \cite{Dav}) The conjecture is known to be true, see for example, \cite{CooLon}, \cite{Gon}, \cite{Sin}. Proposition \ref{cox} provides a new proof of the conjecture for right-angled Coxeter groups.

\begin{corollary}\label{commen}
Let $W$ be an infinite irreducible non-affine right-angled Coxeter group. Then $W$ is not commensurable to any uniform lattice in a higher rank non-compact connected semi-simple Lie group $G$.
\end{corollary}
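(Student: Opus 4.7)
The plan is to argue by contradiction, leveraging Proposition \ref{cox} together with the commensurability invariance of algebraic rank contained in Proposition \ref{algrkpro}(1). Suppose $W$ were commensurable with a uniform lattice $\Gamma$ in $G$, so that by the definition just recalled one obtains finite-index subgroups $H_{1} \leq W$ and $H_{2} \leq \Gamma$ with $H_{1} \cong H_{2}$. Applying Proposition \ref{algrkpro}(1) to each pair $(W, H_{1})$ and $(\Gamma, H_{2})$ would yield $\algrk(W) = \algrk(H_{1}) = \algrk(H_{2}) = \algrk(\Gamma)$.

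Next, I would pin down $\algrk(\Gamma)$ via the Ballmann--Eberlein theorem recalled in the introduction. Since $\Gamma$ is a torsion-free (up to passage to a finite-index subgroup, which is harmless by Proposition \ref{algrkpro}(1)) uniform lattice in $G$, the quotient $M = \Gamma \backslash G/K$ is a compact locally symmetric manifold of bounded nonpositive sectional curvature and finite volume, and the Ballmann--Eberlein identification gives $\algrk(\Gamma)$ equal to the geometric rank of $\widetilde{M} = G/K$. The geometric rank of the symmetric space $G/K$ of noncompact type is exactly the $\mathbb{R}$-rank of $G$, so the higher rank hypothesis forces $\algrk(\Gamma) \geq 2$.

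Putting the two steps together, $\algrk(W) = \algrk(\Gamma) \geq 2$, which directly contradicts Proposition \ref{cox} asserting $\algrk(W) = 1$. The main obstacle is essentially vacuous: both the invariance of algebraic rank under commensurability and the Ballmann--Eberlein identification are already in place, so the only item to check is that the lattice $\Gamma$ meets the hypotheses of the latter theorem, which is immediate for a uniform lattice in a non-compact connected semi-simple Lie group. Consequently, the proof reduces to stringing together Proposition \ref{cox}, Proposition \ref{algrkpro}(1), and the classical rank computation for symmetric spaces.
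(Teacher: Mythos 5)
Your proposal is correct and follows essentially the same route as the paper: invoke the commensurability invariance of $\algrk$ from Proposition \ref{algrkpro}(1), show the uniform lattice has $\algrk \geq 2$, and contradict $\algrk(W)=1$ from Proposition \ref{cox}. The only cosmetic difference is that the paper cites \cite[Theorem 3.11]{BalEbe} directly for $\algrk(\Lambda)=\algrk(G)\geq 2$, whereas you rederive this via the Ballmann--Eberlein identification with the geometric rank of the locally symmetric space $G/K$; both are valid justifications of the same input.
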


\begin{proof}
Let $\Lambda$ be a uniform lattice in $G$. By \cite[Theorem 3.11]{BalEbe}, $\algrk(\Lambda) = \algrk(G) \geq 2$. Applying Proposition \ref{algrkpro}, we obtain that any group $\Gamma$ commensurable to $\Lambda$ satisfies $\algrk(\Gamma) = \algrk(\Lambda) \geq 2$. On the other hand, for any finite index subgroup $W'$ of $W$, $\algrk(W)=\algrk(W')=1$. Therefore, $W$ and $\Lambda$ cannot be commensurable.
\end{proof}

The other corollary follows from Quasi-Isometry rigidity theorem due to Kleiner-Leeb (\cite{KleLee}) and Eskin-Farb (\cite{EskFar}).

\begin{corollary}
Let $W$ be an infinite irreducible non-affine right-angled Coxeter group. Then $W$ is not quasi-isometric to any uniform lattice in a higher rank non-compact connected semi-simple Lie group $G$.
\end{corollary}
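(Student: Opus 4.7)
The plan is to reduce the claim to Corollary \ref{commen} by invoking the quasi-isometric rigidity theorem of Kleiner-Leeb \cite{KleLee} and Eskin-Farb \cite{EskFar}. Arguing by contradiction, suppose that $W$ is quasi-isometric to a uniform lattice $\Lambda$ in a higher rank non-compact connected semi-simple Lie group $G$. Since $\Lambda$ is itself quasi-isometric to the associated symmetric space $X = G/K$ (with $K < G$ a maximal compact subgroup), $W$ is quasi-isometric to $X$ as well.

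Quasi-isometric rigidity for higher rank symmetric spaces asserts that any finitely generated group quasi-isometric to such an $X$ admits a finite normal subgroup whose quotient is a uniform lattice in the isometry group of $X$. Applied to $W$, this produces a finite normal subgroup $N \triangleleft W$ with $W/N$ a uniform lattice in $\mathrm{Isom}(X)$. To finish, I would promote this to a commensurability between $W$ and a uniform lattice in $G$. Since right-angled Coxeter groups are linear over $\mathbb{Z}$ via the Tits representation, they are residually finite, so there exists a finite index subgroup $W_{0} \leq W$ satisfying $W_{0} \cap N = \{e\}$. The restriction of the projection $W \twoheadrightarrow W/N$ to $W_{0}$ is then injective with image $W_{0}N/N$ of finite index in $W/N$, so $W_{0}$ sits simultaneously as a finite index subgroup of $W$ and of the uniform lattice $W/N$. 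This exhibits $W$ as commensurable, in the sense of the paper, with a uniform lattice in $G$, contradicting Corollary \ref{commen}.

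The main substantive content is completely absorbed by the quoted QI-rigidity theorem. On our end the only nontrivial step is the residual-finiteness passage that trades the finite-normal-quotient description of $W$ for a genuine commensurability statement; I expect this to be where the only real care is needed, but the argument is otherwise routine.
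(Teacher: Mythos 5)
Your argument is correct and takes essentially the same route as the paper: invoke the Kleiner--Leeb/Eskin--Farb quasi-isometric rigidity theorem to reduce the statement to commensurability with a uniform lattice, then apply Corollary \ref{commen}. The only difference is that you explicitly justify the passage from the rigidity theorem's actual conclusion (a finite normal subgroup with uniform-lattice quotient) to commensurability in the paper's sense, using residual finiteness of $W$ to split off the finite kernel --- a step the paper's one-line proof leaves implicit but which is needed and which you handle correctly.
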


\begin{proof}
By QI-rigidity theorem, if $W$ is quasi-isometric to a uniform lattice in a higher rank non-compact connected semi-simple Lie group, $W$ should be commensurable to a lattice. By Corollary \ref{commen}, $W$ cannot be commensurable to the lattice.
\end{proof}


\subsection{Algebraic Rank of Right-Angled Artin Groups}\label{raag}

An Artin group $A$ is a group with the following presentation :

$$A=\langle s_{1}, \cdots, s_{n} | \underbrace{s_{i}s_{j}\cdots}_{m_{ij}} = \underbrace{s_{j}s_{i}\cdots}_{m_{ij}}, \mathrm{\,\,for\,\,all\,\,} i \neq j \rangle,$$ 
where $m_{ij}=m_{ji}$ is an integer $\geq 2$ or $m_{ij}=\infty$ in which case we omit the relation between $s_{i}$ and $s_{j}$. As one can see, by adding relations $s_{i}=s_{i}^{-1}$ to the presentation, we obtain a Coxeter group. Right-angled Artin groups are those Artin groups for which all $m_{ij}=2$ or $\infty$ for $i \neq j$.

One of the easy ways of defining a right-angled Coxeter group or a right-angled Artin group is via \emph{the defining graph} $\Gamma$. This is the graph whose vertices are labeled by $S=\{s_{1}, \cdots, s_{n}\}$ and two vertices $s_{i}$ and $s_{j}$ are connected if $m_{ij}=2$. We denote by $A_{\Gamma}\, (W_{\Gamma}$, respectively) the right-angled Artin group (the right-angled Coxeter group, respectively) associated to a finite simplicial graph $\Gamma$. For example, if $\Gamma$ consists of $n$ vertices and no edges, then $A_{\Gamma}$ is the free group on $n$ generators. At the other extreme, if $\Gamma$ is a complete graph with $n$ vertices, $A_{\Gamma}$ is the free abelian group of rank $n$. 

Analogous to the Coxeter group situation, there is a $\mathrm{CAT}(0)$ space associated to a right-angled Artin group $A_{\Gamma}$, which can be constructed by the following process : begin with a wedge of circles attached to a point $x_{0}$ and labeled by the generators $s_{1}, \cdots s_{n}$. For each edge connecting $s_{i}$ and $s_{j}$ in $\Gamma$, attach a $2$-torus with boundary labeled by the relator $s_{i}s_{j}s_{i}^{-1}s_{j}^{-1}$. For each triangle connecting $s_{i},s_{j},s_{k}$ in $\Gamma$, attach a $3$-torus with faces corresponding to the tori for the three edges of triangle. Continuing this process, attach a $k$-torus for each set of $k$-mutually commuting generators. The resulting cube complex is called \emph{a Salvetti complex} for $A_{\Gamma}$ and denoted by $\mathcal{S}_{\Gamma}$. It is easy to verify that the fundamental group of $\mathcal{S}_{\Gamma}$ is $A_{\Gamma}$ and the link of the unique vertex $x_{0}$ is a flag. It follows from Gromov's criterion that the universal cover $X_{\Gamma}$ of the complex $\mathcal{S}_{\Gamma}$ is a $\mathrm{CAT}(0)$ cube complex, and $A_{\Gamma}$ acts on $X_{\Gamma}$ freely and cocompactly. 

Given two graphs $\Gamma_{1}, \Gamma_{2}$, their \emph{join} is the graph obtained by connecting every vertex of $\Gamma_{1}$ to every vertex of $\Gamma_{2}$. If $\Gamma$ is the join of $\Gamma_{1}$ and  $\Gamma_{2}$, then $A_{\Gamma} = A_{\Gamma_{1}} \times A_{\Gamma_{2}}$ and $X_{\Gamma} = X_{\Gamma_{1}} \times X_{\Gamma_{2}}$. We prove 


\setlength{\unitlength}{1cm}
\begin{picture}(11,4.2)
\thicklines

\put(1.5,0.5){\large{$\Gamma$}}
\put(0.5,2){\circle*{.1}}
\put(2.5,2){\circle*{.1}}
\put(1.5,3){\circle*{.1}}
\put(0.5,2){\line(1,0){2}}

\put(5.5,0.5){\large{$\Gamma'$}}
\put(4.5,1.5){\circle*{.1}}
\put(6.5,1.5){\circle*{.1}}
\put(5.5,2.0){\circle*{.1}}
\put(4.5,1.5){\line(1,0){2}}
\put(4.5,3.5){\circle*{.1}}
\put(6.5,3.5){\circle*{.1}}
\put(5.5,4.0){\circle*{.1}}
\put(4.5,3.5){\line(1,0){2}}
\put(4.5,1.5){\line(1,1){2}}
\put(6.5,1.5){\line(-1,1){0.92}}
\put(4.5,3.5){\line(1,-1){0.92}}

\put(9.5,0.5){\large{$\Gamma''$}}
\put(8.5,1.5){\circle*{.1}}
\put(10.5,1.5){\circle*{.1}}
\put(9.5,2.0){\circle*{.1}}
\put(8.5,1.5){\line(1,0){2}}
\put(8.5,1.5){\line(2,1){1}}
\put(10.5,1.5){\line(-2,1){1}}
\put(8.5,3.5){\circle*{.1}}
\put(10.5,3.5){\circle*{.1}}
\put(9.5,4.0){\circle*{.1}}
\put(8.5,3.5){\line(1,0){2}}
\put(9.5,2.0){\line(2,3){0.43}}
\put(9.5,2.0){\line(-2,3){0.43}}
\put(10.5,3.5){\line(-2,-3){0.43}}
\put(8.5,3.5){\line(2,-3){0.43}}
\put(8.5,1.5){\line(2,5){0.77}}
\put(9.5,4.0){\line(-2,-5){0.18}}
\put(10.5,1.5){\line(-2,5){0.77}}
\put(9.5,4.0){\line(2,-5){0.18}}

\qbezier(8.5,1.5)(12,0.25)(10.5,3.5)
\qbezier(9.3,1.15)(7,0.75)(8.5,3.5)
\qbezier(10.5,1.5)(9.7,1.25)(9.7,1.25)

\end{picture}


\begin{proposition}\label{art}
If $\Gamma$ is not a join, then $\mathrm{rank}(A_{\Gamma})=1$.
\end{proposition}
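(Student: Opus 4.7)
The plan mirrors the Coxeter argument of Proposition~\ref{coxrk} and Corollary~\ref{norcox}: exhibit a subset of $\mathcal{A}_1(A_\Gamma)$ whose finitely many left translates cover the group, and repeat the argument for every finite index subgroup. Since $A_\Gamma$ is torsion-free, this yields $\algrk(A_\Gamma)=1$.

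First I would call $g\in A_\Gamma$ \emph{essential} when some (equivalently any) cyclic reduction of $g$ has support equal to the full vertex set $S$. The identification of essential elements with $\mathcal{A}_1$-elements comes from Servatius's centralizer theorem: for a cyclically reduced $h\in A_\Gamma$, the centralizer is the direct product $\langle h_1\rangle\times\cdots\times\langle h_r\rangle\times A_{\mathrm{Lk}(\mathrm{supp}(h))}$, where $h=h_1\cdots h_r$ reflects the join decomposition of the subgraph spanned by $\mathrm{supp}(h)$. When $\mathrm{supp}(h)=S$ and $\Gamma$ is not a join, this decomposition is trivial ($r=1$) and $\mathrm{Lk}(S)$ is empty, so $C(h)=\langle h_0\rangle\cong\mathbb{Z}$ for the primitive root $h_0$ of $h$. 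Conjugating, $C_{A_\Gamma}(g)\cong\mathbb{Z}$, so every essential element lies in $\mathcal{A}_1(A_\Gamma)$.

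Next I would use the abelianization $\pi\colon A_\Gamma\to\mathbb{Z}^n$ to detect essentiality: since $\pi$ is constant on conjugacy classes and the support of the cyclic reduction of $g$ contains the support of $\pi(g)$, $g$ is essential whenever $\pi(g)\in\mathbb{Z}^n$ has every coordinate non-zero. To cover $A_\Gamma$ by finitely many translates of essential elements, set $K=\{s_1^{a_1}\cdots s_n^{a_n}\mid (a_1,\dots,a_n)\in\{0,1\}^n\}$. For $w\in A_\Gamma$ with $\pi(w)=(e_1,\dots,e_n)$, pick $a_i=1$ when $e_i=0$ and $a_i=0$ when $e_i=-1$ (arbitrary otherwise); then $\pi(k_aw)$ has no zero entry, so $k_aw$ is essential. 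Hence $r(A_\Gamma)\le 1$.

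Finally, to control $\algrk(A_\Gamma)$ I would extend this to every finite index subgroup $G'$. The image $\pi(G')$ is a finite index subgroup of $\mathbb{Z}^n$, hence contains $m\mathbb{Z}^n$ for some $m\ge 1$. For each $(a_1,\dots,a_n)\in\{0,1\}^n$, choose any lift $k_a\in G'$ of $(a_1m,\dots,a_nm)$; the same avoidance argument, with $m$ in place of $1$, produces for every $w\in G'$ some $k_a$ with $\pi(k_aw)$ of full support, so $k_aw$ is essential. Its $G'$-centralizer is a non-trivial subgroup of the infinite cyclic $C_{A_\Gamma}(k_aw)$, hence itself infinite cyclic, and $k_aw\in\mathcal{A}_1(G')$; thus $r(G')\le 1$. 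The main obstacle is the initial step: one must import Servatius's centralizer theorem to convert the non-join hypothesis on $\Gamma$ into cyclic centralizers. Once this is in hand, the RAAG case is cleaner than the Coxeter case, since the $\mathbb{Z}^n$-valued abelianization supplies a completely transparent finite correction set and obviates the delicate good/bad generator analysis of Lemmas~\ref{coxlemma1} and~\ref{coxlemma2}.
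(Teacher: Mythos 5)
Your proof is correct, but it follows a genuinely different route from the paper's. The paper never works inside $A_{\Gamma}$ directly: it invokes the Davis--Januszkiewicz embedding (Theorem \ref{davjan}), under which $A_{\Gamma}$ and the right-angled Coxeter group $W_{\Gamma'}$ are both finite-index subgroups of $W_{\Gamma''}$, checks via Lemma \ref{finiteindex} that $\Gamma'$ is not a join (so $W_{\Gamma'}$ is irreducible) and that $W_{\Gamma'}$ is not affine (a finite-index free abelian subgroup would force two non-adjacent generators of $A_{\Gamma}$ to have commuting powers, contradicting that they generate a free group), and then transfers $\algrk(W_{\Gamma'})=1$ from Proposition \ref{cox} to $A_{\Gamma}$ through Proposition \ref{algrkpro}. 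You instead argue intrinsically: Servatius's Centralizer Theorem converts the non-join hypothesis into the statement that full-support cyclically reduced elements have infinite cyclic centralizer, hence lie in $\mathcal{A}_{1}$, and the abelianization $\pi\colon A_{\Gamma}\to\mathbb{Z}^{n}$ supplies the finite correction set $\{0,1\}^{n}$ (with the shift $1\mapsto m$ handling an arbitrary finite-index subgroup $G'$, since $\pi(G')\supseteq m\mathbb{Z}^{n}$); your coordinate-avoidance step and the passage from $C_{A_{\Gamma}}(k_{a}w)\cong\mathbb{Z}$ to $C_{G'}(k_{a}w)\cong\mathbb{Z}$ are both sound, and torsion-freeness gives the lower bound $\algrk(A_{\Gamma})\geq 1$ exactly as in Section 2. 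What each approach buys: yours is self-contained within the RAAG, bypasses Theorem \ref{davjan} entirely, and avoids the delicate good/bad-generator combinatorics of Lemmas \ref{coxlemma1} and \ref{coxlemma2} because the $\mathbb{Z}^{n}$-quotient detects essentiality so cleanly --- but it imports Servatius's theorem as an external black box; the paper's approach needs no RAAG-specific centralizer theory at all, only the elementary graph-theoretic Lemma \ref{finiteindex}, at the price of routing through the substantially harder Coxeter case already established.
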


\begin{remark}
If $\Gamma$ is the join of $\Gamma_{1}$ and $\Gamma_{2}$, then
$$\mathrm{rank}(A_{\Gamma}) = \mathrm{rank}(A_{\Gamma_{1}}) + \mathrm{rank}(A_{\Gamma_{2}}) \geq 2.$$
\end{remark}

The proof of Proposition \ref{art} is a direct consequence of a theorem due to Davis and Januszkiewicz(\cite{DavJan}).
For a given graph $\Gamma$, we define two graphs $\Gamma '$ and $\Gamma''$ as follows : The vertex set of $\Gamma ''$ is $I \times \{0,1\}$, where $I$ is the vertex set of $\Gamma$. Two vertices $(i,1)$ and $(j,1)$ in $I \times 1$ are connected by an edge in $\Gamma''$ if and only if the corresponding vertices $i$ and $j$ span an edge in $\Gamma$. Any two distinct vertices in $I \times 0$ are connected by an edge. Finally, vertices $(i,0)$ and $(j,1)$ are connected by an edge if and only if $i \neq j$. The vertex set of $\Gamma '$ is $I \times \{-1, 1\}$. The subsets $I\times (-1)$ and $I \times 1$ both span copies of $\Gamma$. A vertex $(i,-1)$ in $I \times (-1)$ is connected to $(j,1)$ in $I \times 1$ if and only if $i \neq j$ and the vertices $i$ and $j$ span an edge of $\Gamma$. (See above for an example.) Then

\begin{theorem}{\cite{DavJan}}\label{davjan}
$A_{\Gamma}$ and $W_{\Gamma'}$ are subgroups of $W_{\Gamma''}$ of index $2^{I}$.
\end{theorem}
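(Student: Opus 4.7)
My plan is to exhibit both embeddings explicitly and to compute each index by pairing it with an augmentation $W_{\Gamma''}\to(\mathbb{Z}/2)^I$ whose kernel will turn out to be the image. The four homomorphisms to build are $\phi_A\colon A_\Gamma \to W_{\Gamma''}$ with $s_i\mapsto t_i := s_{(i,0)} s_{(i,1)}$; $\phi_W\colon W_{\Gamma'}\to W_{\Gamma''}$ with $(i,1)\mapsto s_{(i,1)}$ and $(i,-1)\mapsto s_{(i,0)}s_{(i,1)}s_{(i,0)}$; $\psi_A\colon W_{\Gamma''}\to(\mathbb{Z}/2)^I$ sending both $s_{(i,0)}$ and $s_{(i,1)}$ to $e_i$; and $\psi_W\colon W_{\Gamma''}\to(\mathbb{Z}/2)^I$ sending $s_{(i,0)}\mapsto e_i$, $s_{(i,1)}\mapsto 0$. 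The theorem will follow once I establish that each $\psi$ is a surjection (so $|W_{\Gamma''}:\ker\psi|=2^{|I|}$), that $\mathrm{image}(\phi_A) = \ker\psi_A$ and $\mathrm{image}(\phi_W) = \ker\psi_W$, and that $\phi_A$ and $\phi_W$ are injective.

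Well-definedness of each map is a direct computation with the presentation of $W_{\Gamma''}$. For $\phi_A$, the relation $t_i t_j = t_j t_i$ for $\{i,j\}\in E(\Gamma)$ reduces to a four-letter shuffle using that every pair in $I\times\{0\}$ commutes, that $s_{(i,0)}$ commutes with $s_{(j,1)}$ whenever $i\neq j$, and that $s_{(i,1)}$ and $s_{(j,1)}$ commute iff $\{i,j\}\in E(\Gamma)$. For $\phi_W$, the three edge-types of $\Gamma'$ are checked in turn; the key observation is that conjugation by the commuting product $s_{(i,0)}s_{(j,0)}$ intertwines the pair $(s_{(i,1)}, s_{(j,1)})$ with the pair $(\phi_W(i,-1), \phi_W(j,-1))$ when $i\neq j$, so commutation among the image reflections matches $\Gamma$-adjacency exactly as required. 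Well-definedness of $\psi_A$ and $\psi_W$ is automatic because every Coxeter relation squared maps to $0$ in a $2$-torsion abelian group; surjectivity is clear. Verifying $\mathrm{image}(\phi_A)\subset\ker\psi_A$ and $\mathrm{image}(\phi_W)\subset\ker\psi_W$ is one line each: $\psi_A(t_i) = 2e_i = 0$ and $\psi_W(s_{(i,0)}s_{(i,1)}s_{(i,0)}) = 2e_i = 0$.

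To promote each inclusion to an equality I plan a word-combinatorial induction. By Lemma \ref{word} applied to $W_{\Gamma''}$, the parity of each generator's occurrences is an invariant of any reduced expression, so an element of $\ker\psi_W$ has each $s_{(i,0)}$ appearing an even number of times, and an element of $\ker\psi_A$ has the two generators $s_{(i,0)}, s_{(i,1)}$ appearing a total even number of times. Using the abundant commutations in $\Gamma''$ — every pair in $I\times\{0\}$ commutes, and $s_{(i,0)}$ commutes with $s_{(j,1)}$ for all $i\neq j$ — I would pair up these occurrences and absorb each matched pair into the adjacent $s_{(i,1)}$-factor, rewriting the word in the desired generators $t_i$ (for $\phi_A$) or in the generators $s_{(j,1)}$ and $s_{(i,0)}s_{(i,1)}s_{(i,0)}$ (for $\phi_W$).

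The main obstacle is injectivity of $\phi_A$ and $\phi_W$, which I plan to address geometrically. The Davis complex $\Sigma := \Sigma_{W_{\Gamma''}}$ is a $\CAT(0)$ cube complex on which $W_{\Gamma''}$ acts properly and cocompactly, and each $t_i$ acts as an infinite-order hyperbolic translation along a combinatorial axis. Hence $\phi_A(A_\Gamma)$ is torsion-free and acts freely on $\Sigma$; I would identify the quotient $\phi_A(A_\Gamma)\backslash\Sigma$ with the Salvetti complex $\mathcal{S}_\Gamma$ by matching cubes — a product cube corresponding to $\{t_{i_1},\ldots,t_{i_k}\}$ occurs in $\Sigma$ precisely when $\{i_1,\ldots,i_k\}$ spans a clique in $\Gamma$. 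Since $\mathcal{S}_\Gamma$ is a $K(A_\Gamma,1)$, the covering $\Sigma \to \mathcal{S}_\Gamma$ exhibits $\phi_A(A_\Gamma) \cong \pi_1(\mathcal{S}_\Gamma) = A_\Gamma$, forcing injectivity. A parallel argument treats $\phi_W$ by identifying $\phi_W(W_{\Gamma'})\backslash\Sigma$ with the Davis complex $\Sigma_{W_{\Gamma'}}$. The delicate step I expect to spend most effort on is this explicit matching of fundamental domains — tracking how the product-cube structure of $\Sigma$ restricts under the finite-index subgroup and recovering the correct cell structure of $\mathcal{S}_\Gamma$ (resp.\ $\Sigma_{W_{\Gamma'}}$), which is where the content of Davis--Januszkiewicz's theorem really resides.
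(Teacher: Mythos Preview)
The paper does not give its own proof of this theorem; it is simply quoted from Davis--Januszkiewicz and used as a black box. So there is no ``paper's proof'' to compare against, and your proposal is really an attempt to reproduce the Davis--Januszkiewicz argument itself.

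Your explicit homomorphisms $\phi_A,\phi_W,\psi_A,\psi_W$ are the right ones, and the well-definedness and containment checks are correct. The combinatorial argument for $\mathrm{image}(\phi)=\ker(\psi)$ is workable; in fact it becomes transparent once you observe that $W_{I\times\{0\}}\cong(\mathbb{Z}/2)^I$ is a section of both $\psi_A$ and $\psi_W$, so $W_{\Gamma''}$ is a semidirect product and the kernel is generated by the $W_{I\times\{0\}}$-conjugates of the obvious normal generators --- and conjugating $s_{(i,1)}$ or $t_i$ by a product of $s_{(j,0)}$'s lands back in the proposed generating set because $s_{(j,0)}$ commutes with $s_{(i,1)}$ for $j\neq i$.

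However, your geometric argument for injectivity has a genuine gap. The quotient $\phi_A(A_\Gamma)\backslash\Sigma_{W_{\Gamma''}}$ is \emph{not} the Salvetti complex $\mathcal{S}_\Gamma$: their dimensions differ. A maximal clique in $\Gamma''$ has size $|I|$ (take all of $I\times\{0\}$), so $\Sigma_{W_{\Gamma''}}$ is $|I|$-dimensional, whereas $\mathcal{S}_\Gamma$ has dimension equal to the maximal clique size in $\Gamma$. For instance, if $\Gamma$ is two isolated vertices then $\mathcal{S}_\Gamma$ is a wedge of two circles, but $\Gamma''$ is a path of length~$3$ and $\Sigma_{W_{\Gamma''}}$ is a $2$-complex. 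So the ``matching cubes'' step cannot succeed as stated. The parallel claim for $\phi_W$ is even more problematic, since $\phi_W(W_{\Gamma'})$ contains reflections and does not act freely on $\Sigma_{W_{\Gamma''}}$.

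A clean fix is purely algebraic: construct the inverse. Define $\Psi\colon W_{\Gamma''}\to A_\Gamma\rtimes(\mathbb{Z}/2)^I$ (where $e_i$ acts by inverting $s_i$) by $s_{(i,0)}\mapsto(1,e_i)$ and $s_{(i,1)}\mapsto(s_i,e_i)$. One checks directly on the defining relations of $W_{\Gamma''}$ that $\Psi$ is well-defined, and that $\Psi\circ\phi_A$ is (up to the automorphism $s_i\mapsto s_i^{-1}$) the identity on $A_\Gamma$; hence $\phi_A$ is injective. An entirely analogous map with target $W_{\Gamma'}\rtimes(\mathbb{Z}/2)^I$ handles $\phi_W$. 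This is essentially what Davis--Januszkiewicz do.
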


\begin{lemma}\label{finiteindex}
$\Gamma$ is a join if and only if $\Gamma '$ is a join.
\end{lemma}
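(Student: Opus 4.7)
The plan is to prove both implications separately, with the reverse direction resting on a single key observation about the pair $(i,-1), (i,1)$.

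For the forward direction, suppose $\Gamma$ is the join of two nonempty full subgraphs spanned by $I_1, I_2 \subset I$ with $I = I_1 \sqcup I_2$. I would set $A' := I_1 \times \{-1,1\}$ and $B' := I_2 \times \{-1,1\}$ and verify that $\Gamma' = A' * B'$ by checking all four types of edges between $A'$ and $B'$. Concretely, for $i \in I_1$ and $j \in I_2$: the pairs $((i,-1),(j,-1))$ and $((i,1),(j,1))$ are edges since each layer $I \times \{\pm 1\}$ spans a copy of $\Gamma$ and $i,j$ are adjacent in $\Gamma$; the cross pairs $((i,-1),(j,1))$ and $((i,1),(j,-1))$ are edges because $i \ne j$ and $i,j$ span an edge in $\Gamma$. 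Each case is immediate from the hypotheses and the definition of $\Gamma'$.

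For the reverse direction, suppose $\Gamma' = A' * B'$ with $A', B'$ nonempty and disjoint. The crucial observation is that the vertices $(i,-1)$ and $(i,1)$ are never adjacent in $\Gamma'$, since the cross-layer adjacency condition requires $i \neq j$. Hence they cannot lie on opposite sides of the join decomposition, so for every $i \in I$ both $(i,-1)$ and $(i,1)$ lie in the same part. This lets me define $A := \{ i \in I : (i,-1) \in A'\}$ and $B := \{ i \in I : (i,-1) \in B'\}$, which partition $I$ into two nonempty subsets. For any $i \in A$ and $j \in B$, the vertices $(i,-1) \in A'$ and $(j,1) \in B'$ must be adjacent in $\Gamma'$, which by definition forces $i \neq j$ and the edge $\{i,j\}$ in $\Gamma$. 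So every vertex of $A$ is joined to every vertex of $B$ in $\Gamma$, exhibiting $\Gamma$ as a join.

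I do not expect a real obstacle here; the only subtlety is recognizing that non-adjacency of $(i,-1)$ and $(i,1)$ is precisely what forces the join decomposition of $\Gamma'$ to descend to one on $\Gamma$. Everything else is straightforward unwinding of definitions, so the proof should be short.
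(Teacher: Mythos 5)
Your proof is correct and takes essentially the same route as the paper: the forward direction uses the decomposition into $I_{1}\times\{-1,1\}$ and $I_{2}\times\{-1,1\}$, and the reverse direction rests on the observation that $(i,-1)$ and $(i,1)$ are never adjacent in $\Gamma'$, hence must lie in the same part of any join decomposition. You merely supply the explicit justification for the step the paper states without proof (``a vertex $(i,1)\in\Gamma'_{I'_{1}}$ if and only if $(i,-1)\in\Gamma'_{I'_{1}}$'').
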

\begin{proof}
For any subset $I' \subset I$, let $\Gamma_{I'}$ be a full subgraph of $\Gamma$ whose vertex set is $I'$.
It is obvious that if $\Gamma$ is a join, then $\Gamma '$ is a join. Namely, if $\Gamma$ is a join of $\Gamma_{I_{1}}$ and $\Gamma_{I_{2}}$, then $\Gamma ' $ is a join of $\Gamma^{'}_{I_{1}\times\{-1,1\}}$ and $\Gamma^{'}_{I_{2}\times\{-1,1\}}$. Conversely, suppose $\Gamma'$ is a join of $\Gamma^{'}_{I'_{1}}$ and $\Gamma^{'}_{I'_{2}}$. Note that a vertex $(i,1) \in \Gamma^{'}_{I'_{1}}$ if and only if $(i,-1) \in \Gamma^{'}_{I'_{1}}$. Therefore, $\Gamma'$ is a join of $\Gamma^{'}_{I_{1}\times\{-1,1\}}$ and $\Gamma^{'}_{I_{2}\times\{-1,1\}}$ for some $I_{1}, I_{2} \subset I$ and $\Gamma$ is a join of $\Gamma_{I_1}$ and $\Gamma_{I_2}$.
\end{proof}

\begin{proof}[\,\,of Propositon \ref{art}]
Suppose that $\Gamma$ is not a join. By Lemma \ref{finiteindex}, the corresponding graph $\Gamma'$ is not a join. It follows that $W_{\Gamma '}$ is irreducible. If $W_{\Gamma'}$ has a finite index free abelian subgroup $K$, then $K \cap A_{\Gamma}$ is also a finite index free abelian subgroup of $A_{\Gamma}$. But this is impossible: Since $\Gamma$ is assumed to be not a join, there are two vertices in $\Gamma$ which are not joined by an edge and they generate a non-abelian free subgroup of $A_{\Gamma}$. Call $a$ and $b$ for the generators. On the other hand, since $K \cap A_{\Gamma}$ is of finite index in $A_{\Gamma}$, there exist $N_1$ and $N_2$ such that $a^{N_1}, b^{N_2} \in K \cap A_{\Gamma}$ and $a^{N_1} b^{N_2} =b^{N_2}a^{N_1}$. This contradicts that $a$ and $b$ generate a free group. By Proposition \ref{cox}, $\mathrm{rank}(W_{\Gamma'})=1$, and hence, $\mathrm{rank}(A_{\Gamma})=1$.
\end{proof}

\begin{corollary}
If $\Gamma$ is not a join, then $A_{\Gamma}$ is not commensurable (or quasi-isometric) to any uniform lattice in a non-compact connected semi-simple Lie group of higher rank.
\end{corollary}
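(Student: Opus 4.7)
The plan is to mimic the strategy used in Corollary \ref{commen} and its quasi-isometric counterpart, with Proposition \ref{art} playing the role previously played by Proposition \ref{cox}. Since we already know $\mathrm{rank}(A_\Gamma)=1$ whenever $\Gamma$ is not a join, the whole argument reduces to an algebraic-rank obstruction combined with quasi-isometric rigidity.

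First I would handle commensurability. Let $\Lambda$ be a uniform lattice in a higher rank non-compact connected semi-simple Lie group $G$. By the Ballmann--Eberlein result cited in the proof of Corollary \ref{commen} (namely \cite[Theorem 3.11]{BalEbe}), one has $\algrk(\Lambda)=\algrk(G)\geq 2$. Proposition \ref{algrkpro}(1) asserts that algebraic rank is invariant under passage to finite index subgroups, so any group commensurable with $\Lambda$ inherits the inequality $\algrk(\cdot)\geq 2$. On the other hand, Proposition \ref{art} gives $\mathrm{rank}(A_\Gamma)=1$, and again by Proposition \ref{algrkpro}(1) the same holds for every finite-index subgroup of $A_\Gamma$. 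These two facts are incompatible, so $A_\Gamma$ cannot be commensurable to $\Lambda$.

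Next I would reduce the quasi-isometric statement to the commensurability statement by invoking the Kleiner--Leeb / Eskin--Farb quasi-isometric rigidity theorem (\cite{KleLee}, \cite{EskFar}), exactly as done in the Coxeter case. If $A_\Gamma$ were quasi-isometric to a uniform lattice $\Lambda$ in such a $G$, then QI-rigidity would force $A_\Gamma$ to be commensurable (in the broad sense, after passing to finite-index subgroups or quotients by finite normal subgroups) with some uniform lattice in $G$. Since $A_\Gamma$ is torsion-free (it is a right-angled Artin group), one does not even need to worry about the ``virtually'' distinction in an essential way, and the previous paragraph yields a contradiction.

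The only potential obstacle is verifying that the QI-rigidity conclusion indeed produces commensurability in precisely the sense used in the commensurability definition adopted in the paper; the excerpt and the Coxeter analog already navigate this passage, so I would simply cite it in the same form. With that citation in hand, the corollary follows in two short paragraphs, one for each assertion. No new technical ingredient beyond Proposition \ref{art}, Proposition \ref{algrkpro}, \cite[Theorem 3.11]{BalEbe}, and \cite{KleLee}/\cite{EskFar} is required.
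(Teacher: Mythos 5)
Your proposal is correct and is exactly the argument the paper intends: the corollary is stated without proof precisely because it is the verbatim repetition of the proofs of Corollary \ref{commen} and its quasi-isometry counterpart, with Proposition \ref{art} supplying $\algrk(A_\Gamma)=1$ in place of Proposition \ref{cox}. Nothing further is needed.
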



\section{Relatively Hyperbolic Groups}\label{relhypgppre}
Relatively hyperbolic groups are a generalization of hyperbolic groups. They were introduced by Gromov (\cite{Gro}) and many equivalent definitions have been developed by different authors in different contexts. See, for example, Farb(\cite{Far}), Bowditch(\cite{Bow}), Yaman(\cite{Yam}), Dru\c{t}u-Osin-Sapir(\cite{DruSap}), Osin(\cite{Osi}), Dru\c{t}u(\cite{Dru}), and Mineyev-Yaman(\cite{MinYam}). In this paper, we shall use the Bowditch's definition via geometrically finite convergence groups. Following \cite[Section 2]{Xie}, we recall the notion of relatively hyperbolic groups and the existence of an invariant collection of disjoint horoballs. See \cite{Bow} for details.

Suppose that $M$ is a compact metrizable topological space. Suppose that a group $G$ acts by homeomorphisms on $M$. By definition, $G$ is $\emph{a convergence group}$ if the induced action on the space of distinct triples is properly discontinuous. In such a case, we call an element $g \in G$ a \emph{hyperbolic element} if it has infinite order and fixes exactly two points in $M$. A subgroup $H$ of $G$ is \emph{parabolic} if $H$ is infinite, fixes some point in $M$, and contains no hyperbolic elements. In this case, the fixed point of $H$ is unique. We call the point a \emph{parabolic point} and the nontrivial element in a parabolic subgroup a \emph{parabolic element}. It is necessary that the stabilizer of a parabolic point $\zeta$, $\mathrm{Stab}(\zeta)$, is a parabolic subgroup. A parabolic point $\zeta$ is \emph{a bounded parabolic point} if $\mathrm{Stab}(\zeta)$ acts properly and cocompactly on $M\setminus\{\zeta\}$. A point $\xi \in M$ is \emph{a conical limit point} if there exists a sequence $\{g_{n}\}$ in $G$ and two distinct points $\zeta, \eta \in M$, such that $g_{n}(\xi) \to \zeta$ and $g_{n}(\xi') \to \eta$ for all $\xi' \neq \xi$. Finally, a convergence group $G$ on $M$ is $\emph{a geometrically finite group}$ if each point of $M$ is either a conical limit point or a bounded parabolic point.

\begin{definition}\label{relhypgp}
A group $G$ is \emph{hyperbolic relative to a family of infinite finitely generated subgroups $\mathcal{G}$} if it acts properly discontinuously by isometries on a proper geodesic hyperbolic space $X$ such that the induced action on $\partial X$ is of convergence, geometrically finite, and such that the maximal parabolic subgroups are exactly the elements of $\mathcal{G}$. Elements of $\mathcal{G}$ are called \emph{peripheral subgroups}.
\end{definition}

It is known that all the definitions mentioned above are equivalent, provided that the group $G$ and all peripheral subgroups are infinite and finitely generated. But some authors do not assume that peripheral subgroups are infinite and finitely generated. In fact, it has been shown in \cite{Yam} that the finite generation of peripheral subgroups can be dispensed with. Also some definitions allow the elements of $\mathcal{G}$ to be finite. But, in \cite{Osi}, Osin proved that one can make $\mathcal{G}$ smaller so that all peripheral subgroups are infinite (or possibly empty). The followings are well-known examples of relatively hyperbolic groups.

\begin{example}
\begin{itemize}
\item Hyperbolic groups: These are hyperbolic relative to $\mathcal{G} = \emptyset$.
\item Geometrically finite isometry groups of Hadamard manifolds of negatively pinched sectional curvature: These are hyperbolic relative to the maximal parabolic subgroups.
\item Free products of finitely many finitely generated groups: These are hyperbolic relative to the factors, since the action on the Bass-Serre tree satisfies the second definition of Bowditch. See \cite[Definition 2]{Bow}.
\item Groups $G$ acting geometrically on a CAT(0) space $X$ which has the isolated flats property: In this case, $X$ is an asymptotically tree-graded space and $G$ is hyperbolic relative to the collection of virtually abelian subgroups of rank at least two. (See \cite{HruKle})
\end{itemize}
\end{example}

In the next subsection, we will prove that if $G$ is a relatively hyperbolic group with $|\mathcal{G}| \geq 2$, and at least one peripheral subgroup contains an element of infinite order, then $\mathrm{rank}(G) \leq 1$. The following theorem of Bowditch on the existence of an invariant collection of disjoint horoballs provides the crucial tool in our proof.

Let $X$ be a $\delta$-hyperbolic geodesic metric space for some $\delta > 0$ and $\xi \in \partial X$. A function $h : X \to \mathbb{R}$ is \emph{a horofunction about $\xi$} if there exist constants $c_{1} = c_{1}(\delta), c_{2}=c_{2}(\delta)$ such that if $x,a \in X$ and $d(a,x\xi) \leq c_{1}$, for some geodesic ray $x\xi$ from $x$ to $\xi$, then $|h(a)-h(x)-d(x,a)| \leq c_{2}$. A closed set $B \subset X$ is \emph{a horoball about $\xi$} if there is a horofunction $h$ about $\xi$ and a constant $c=c(\delta)$ such that $h(x) \geq -c$ for all $x\in B$, and $h(x) \leq c$ for all $x \in X\setminus B$. In this case $\xi$ is called the center of the horoball and is uniquely determined by $B$.

\begin{proposition}{\cite[Proposition 6.13]{Bow}}\label{horo}
Let $G$ be a relatively hyperbolic group and $X$ a space on which $G$ acts as in Definition \ref{relhypgp}. Let $\Pi$ be the set of all bounded parabolic points in $\partial X$. Then $\Pi/G$ is finite. Moreover, for any $r>0$, there is a collection of horoballs $\mathcal{B}=\{B_{\xi} | \xi \in \Pi\}$ indexed by $\Pi$ with the following properties
\begin{enumerate}
\item $\mathcal{B}$ is $r$-separated, that is, $d(B_{\xi},B_{\eta}) \geq r$ for all $\xi \neq \eta \in \Pi$.
\item $\mathcal{B}$ is $G$-invariant, that is, $g(B_{\xi}) = B_{g(\zeta)}$ for all $g \in G$ and $\xi \in \Pi$.
\item $Y(\mathcal{B})/G$ is compact, where $Y(\mathcal{B}) = X \setminus \bigcup_{\xi \in \Pi} int(B_{\zeta})$.
\end{enumerate}
\end{proposition}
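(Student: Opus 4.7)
The statement has two essentially independent parts to prove: finiteness of $\Pi/G$, and the existence of the $r$-separated $G$-invariant horoball family with cocompact complement-quotient. I would attack them in that order.

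For the finiteness of $\Pi/G$, my plan is to exploit the fact that the stabilizer of each bounded parabolic point $\zeta$ acts cocompactly on $\partial X \setminus \{\zeta\}$. Fix any conical limit point $\xi_{0} \in \partial X$ and consider the assignment of a parabolic point $\zeta$ to the pair $\{\zeta, \xi_{0}\}$. The $G$-action on the space of distinct pairs is properly discontinuous (it is a convergence action), and geometric finiteness guarantees that every non-conical point of $\partial X$ is a bounded parabolic point. A compactness argument on the orbit space of distinct pairs then bounds the number of $G$-orbits of parabolic points: the essential content is that, unlike conical limit points, bounded parabolic points cannot accumulate onto a representative of another parabolic orbit, since each has a neighborhood made cocompact under its own stabilizer.

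For the horoball construction, I would pick orbit representatives $\zeta_{1}, \ldots, \zeta_{n}$ with stabilizers $P_{i} = \mathrm{Stab}(\zeta_{i})$. For each $i$ I would build a $P_{i}$-invariant horoball $B_{i}$ about $\zeta_{i}$ as follows: pick any horofunction $h_{i}$ about $\zeta_{i}$, then use that the $P_{i}$-action preserves horofunctions about $\zeta_{i}$ up to a bounded additive error controlled by $\delta$, so that a sufficiently deep sublevel set is genuinely $P_{i}$-invariant. Setting $B_{g\zeta_{i}} := g B_{i}$ is then well-defined on orbits because $g\zeta_{i} = h\zeta_{i}$ forces $g^{-1}h \in P_{i}$, which preserves $B_{i}$, and $G$-invariance of the family $\mathcal{B}$ is automatic.

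Property (1), $r$-separation, would be achieved by uniformly shrinking the $B_{i}$ (replace $h_{i}$ by $h_{i} - t$ for large $t$): there are only finitely many orbit-pairs $(i,j)$ to worry about, and proper discontinuity of the $G$-action on distinct triples forces any two distinct translates to separate as they are shrunk, so a single large $t$ handles all cases. Property (2) is immediate from the construction. The main obstacle is property (3), cocompactness of $Y(\mathcal{B})/G$: the horoballs must be simultaneously shallow enough that their $G$-translates sweep out a cobounded region in $X$. The key is that cocompactness of each $P_{i}$-action on $\partial X \setminus \{\zeta_{i}\}$ translates into cocompactness of the $P_{i}$-action on $X$ outside a small neighborhood of $\zeta_{i}$, so $B_{i}$ absorbs a cobounded slab near its own orbit; every remaining point of $Y(\mathcal{B})$ then lies near a conical limit point, and the convergence dynamics at conical points pulls it back into a fixed compact set under $G$. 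The delicate technical balance --- horoballs deep enough to separate, shallow enough for cocompactness --- works because both conditions are coarse (depending only on $\delta$ and on the finitely many orbit representatives), so a uniform choice of depth meets both demands.
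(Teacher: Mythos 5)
This proposition is not proved in the paper at all: it is imported verbatim as \cite[Proposition 6.13]{Bow}, so there is no internal argument to compare yours against. Judged on its own terms, your sketch reproduces the broad shape of Bowditch's construction (build one invariant horoball per orbit representative via horofunctions, push around by the group, then tune the depth), but it contains a step that is genuinely wrong and two steps that are asserted rather than proved.

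The wrong step is in your finiteness argument: you claim that a bounded parabolic point ``cannot accumulate onto a representative of another parabolic orbit, since each has a neighborhood made cocompact under its own stabilizer.'' Cocompactness of $\mathrm{Stab}(\zeta)$ on $\partial X \setminus \{\zeta\}$ does not isolate $\zeta$ in $\partial X$; in a finite-covolume geometrically finite Kleinian group the parabolic fixed points are dense in the limit set, so every parabolic point is an accumulation point of parabolic points from other orbits. An accumulation/compactness argument on pairs therefore cannot distinguish orbits this way, and the finiteness of $\Pi/G$ requires a different and more delicate argument (Tukia's, in the convergence-group setting: one shows that a sequence of parabolic points in pairwise distinct orbits, pushed by the stabilizer of a limiting parabolic point into a compact fundamental domain for $\partial X\setminus\{\zeta\}$, contradicts the convergence property). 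Separately, your $r$-separation step reduces to ``finitely many orbit-pairs $(i,j)$,'' but a pair of orbit indices corresponds to infinitely many actual pairs of horoballs $(B_\xi, B_\eta)$; you need local finiteness of the horoball system, i.e.\ that up to the $G$-action only finitely many pairs of horoballs come within a given distance of one another, and that is itself something to be established, not a consequence of proper discontinuity on triples alone. The cocompactness of $Y(\mathcal{B})/G$ is likewise left at the level of a plausibility claim; it is the heart of the proposition and is where the dichotomy ``conical or bounded parabolic'' actually gets used quantitatively.
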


Note that the intersection of any two peripheral subgroups is finite and there are finitely many conjugacy classes of peripheral subgroups.


\subsection{Algebraic Rank of Relatively Hyperbolic Groups}

In order to prove that a group has an algebraic rank $\leq 1$, we need to figure out the set $\mathcal{A}_{1}(G)$ and show that the group can be covered by finitely many translates of $\mathcal{A}_{1}(G)$. Also the procedure needs to be repeated for all finite index subgroups. We introduce two lemmas which enable us to find the elements such that the group can be covered by translates of $\mathcal{A}_{1}(G)$ by those elements.

For a finite set of isometries $F$ of a metric space $X$ and $x \in X$, let $\lambda(x,F) = \mathrm{max}\{d(f(x),x)|f\in F\}$.

\begin{lemma}{\cite{Kou}}\label{koubi}
Let $X$ be a $\delta$-hyperbolic geodesic metric space and $G$ a group of isometries of $X$ with a finite generating set $S$. If $\lambda(x,S) > 100\delta$ for all $x \in X$, then $G$ contains a hyperbolic element $g$ such that $d_{S}(id,g) =1$ or $2$.
\end{lemma}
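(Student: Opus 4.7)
The plan is to find a hyperbolic isometry among the generators or their pairwise products by exploiting the standard $\delta$-hyperbolic trichotomy: an isometry of a proper $\delta$-hyperbolic space is either elliptic, parabolic, or hyperbolic. The key analytic tool I would use is the well-known sufficient criterion that $g$ is hyperbolic as soon as there exists a point $x$ with $d(x, g^{2} x) \geq d(x, g x) + c\delta$ for a suitable absolute constant $c$; this is proved by comparing the polygonal path $x, g x, g^{2} x, \ldots$ with a genuine geodesic and invoking thinness of triangles to show that the displacement adds up linearly.

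The first step is to pick any $x_{0} \in X$ and test each generator against the criterion at $x_{0}$. If some $s \in S$ satisfies $d(x_{0}, s^{2} x_{0}) \geq d(x_{0}, s x_{0}) + c\delta$, then $s$ itself is hyperbolic and we are done with $d_{S}(\mathrm{id}, g) = 1$. Otherwise $d(x_{0}, s^{2} x_{0}) < d(x_{0}, s x_{0}) + c\delta$ for every $s \in S$, which by a triangle-inequality/thin-triangle argument forces the midpoint $m_{s}$ of a geodesic $[x_{0}, s x_{0}]$ to satisfy $d(m_{s}, s m_{s}) = O(\delta)$. In other words, each generator has a ``quasi-fixed'' region near the corresponding midpoint. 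By hypothesis some $s^{*} \in S$ realizes $d(x_{0}, s^{*} x_{0}) > 100\delta$, so the associated midpoint $m^{*} := m_{s^{*}}$ lies at distance more than $50\delta$ from $x_{0}$.

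The second step is to apply the hypothesis at $m^{*}$: there exists $t \in S$ with $d(m^{*}, t m^{*}) > 100\delta$. I would then analyze the configuration of the segments $[x_{0}, s^{*} x_{0}]$ and $[m^{*}, t m^{*}]$ in $X$ by cases on the Gromov product at $m^{*}$. If this Gromov product is small, a standard ping-pong estimate shows that iterates of $s^{*} t$ drift off to infinity linearly, so $s^{*} t$ is hyperbolic with $d_{S}(\mathrm{id}, s^{*} t) \leq 2$. If the Gromov product is large, the two segments essentially align, so near $m^{*}$ the product $s^{*-1} t$ (or a similar length-two word) has displacement $O(\delta)$, contradicting the hypothesis $\lambda(m^{*}, S) > 100\delta$ unless $t$ itself was hyperbolic, in which case we are done at the first step applied to $t$.

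The main obstacle I expect is bookkeeping of the universal constants: the threshold $100\delta$ in the statement is presumably chosen tightly, so that the additive errors accumulated through the midpoint analysis, the two-step hyperbolicity criterion, and the Gromov-product comparison must all fit together under the same $100\delta$ bound. A proof with a much larger threshold (say $1000\delta$) would be straightforward along the lines above; reaching $100\delta$ requires sharp accounting in each thin-triangle and Gromov-product estimate, and that is where the bulk of the technical work would go.
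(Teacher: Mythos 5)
First, a point of comparison: the paper does not prove this lemma at all --- it is quoted verbatim from Koubi \cite{Kou} and used as a black box --- so your proposal can only be measured against the argument in the literature, not against anything in this paper.

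Your opening moves are the standard (and correct) ones: the criterion that $d(x,g^{2}x)\geq d(x,gx)+c\delta$ forces $g$ to be hyperbolic, and the deduction that if this fails for $s$ at $x_{0}$ then $s$ displaces the midpoint $m_{s}$ of $[x_{0},sx_{0}]$ by only $O(\delta)$. The gap is in the endgame, and it is a gap of logic, not merely of constants. (i) In your ``small Gromov product'' case you invoke a ping-pong estimate to conclude that $s^{*}t$ is hyperbolic, but the two-isometry product criterion requires \emph{both} factors to displace the chosen basepoint by much more than the Gromov product, whereas by your own construction $s^{*}$ displaces $m^{*}$ by only $O(\delta)$; nothing in your setup controls the displacement of $s^{*}$ at $tm^{*}$, which is what you would actually need, and one can arrange configurations (e.g.\ elliptic $s^{*}$ whose fixed set contains a large neighborhood of $[m^{*},tm^{*}]$) where $s^{*}t$ behaves like $t$ near $m^{*}$ and no hyperbolicity is visible. (ii) In your ``large Gromov product'' case, the claimed contradiction is a non sequitur: showing that the length-two word $s^{*-1}t$ has displacement $O(\delta)$ somewhere contradicts nothing, because the hypothesis $\lambda(\cdot,S)>100\delta$ constrains only elements of $S$, and $s^{*-1}t\notin S$. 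Koubi's actual argument closes this loop by assuming that \emph{every} word of length $\leq 2$ is non-hyperbolic, applying the contrapositive of the product criterion to all such pairs simultaneously, and deriving a contradiction with the everywhere-large-displacement hypothesis; that bookkeeping across the whole set $S\cup S\cdot S$ is the real content of the lemma and is absent from your sketch.
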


Hereafter, suppose that $G$ is a relatively hyperbolic group with $|\mathcal{G}| \geq 2$ and $X$ a proper $\delta$-hyperbolic geodesic space on which $G$ acts as in Definition \ref{relhypgp}. Also we assume that there is a peripheral subgroup in $\mathcal{G}$ containing elements of infinite order. Note that the existence of such a peripheral subgroup implies that there are two or more such subgroups by conjugation by hyperbolic elements. Lemma \ref{horo} implies that there is a $200\delta$-separated invariant collection of horoballs $\mathcal{B}$ centered at the parabolic points such that $Y(\mathcal{B})/G$ is compact.

\begin{lemma}{\cite[Lemma 3.1]{Xie}}\label{xie}
There exists a positive integer $k_1$ with the following property : for any infinite order element $\gamma \in G$ and any $x \in Y(\mathcal{B})$, there is some $k, 1 \leq k \leq k_{1}$, such that $d(\gamma^{k}(x),x) \geq 200\delta$.
\end{lemma}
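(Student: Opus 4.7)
The plan is to reduce to the case where $x$ lies in a compact fundamental domain for the $G$-action on $Y(\mathcal{B})$ and then to invoke proper discontinuity together with a pigeonhole argument. Since $Y(\mathcal{B})/G$ is compact by Proposition \ref{horo}, choose a compact set $K \subset Y(\mathcal{B})$ whose $G$-translates cover $Y(\mathcal{B})$. Given any $x \in Y(\mathcal{B})$ and any infinite-order $\gamma \in G$, pick $g \in G$ with $g(x) \in K$; because isometries preserve distance, $d(\gamma^{k}(x),x) = d((g\gamma g^{-1})^{k}(g(x)), g(x))$, and $g\gamma g^{-1}$ again has infinite order. So it suffices to produce a single $k_1$ that works uniformly for all $x \in K$ and all infinite-order $\gamma \in G$.

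Next, the goal is to bound, uniformly in $x \in K$, the number of group elements that move $x$ by at most $200\delta$. Let $K'$ denote the closed $200\delta$-neighborhood of $K$ in $X$, which is compact because $X$ is proper. Proper discontinuity of the $G$-action on $X$ implies that
\[
F \;:=\; \{\, g \in G \,:\, g(K) \cap K' \neq \emptyset \,\}
\]
is finite; put $N_0 := |F|$. For any $x \in K$, if $g \in G$ satisfies $d(g(x), x) \leq 200\delta$, then $g(x) \in K'$ and $g(x) \in g(K)$, so $g \in F$. Consequently the set $F_x := \{\, g \in G : d(g(x),x) \leq 200\delta \,\}$ satisfies $|F_x| \leq N_0$ for every $x \in K$.

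For the pigeonhole step, set $k_1 := N_0 + 1$ and suppose toward contradiction that some infinite-order $\gamma \in G$ and some $x \in K$ satisfy $d(\gamma^{k}(x), x) < 200\delta$ for every $k = 1, 2, \ldots, k_1$. Then the $k_1 = N_0 + 1$ distinct exponents yield elements $\gamma, \gamma^{2}, \ldots, \gamma^{N_0+1}$ all lying in $F_x$, so two of them must coincide, say $\gamma^{i} = \gamma^{j}$ with $1 \leq i < j \leq k_1$. This forces $\gamma^{j-i} = 1$, contradicting the assumption that $\gamma$ has infinite order. The only real subtlety—ensuring that $k_1$ can be chosen independently of both $x$ and $\gamma$—is dispensed with by the reduction to the compact fundamental domain $K$ and the extraction of the uniform bound $N_0$ from properness on the fixed compact neighborhood $K'$; everything after that is a one-line pigeonhole.
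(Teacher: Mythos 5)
Your proof is correct. The paper itself gives no argument for this lemma---it is quoted verbatim from \cite[Lemma 3.1]{Xie}---and your reduction to a compact set $K$ with $G\cdot K = Y(\mathcal{B})$ (using conjugation to transport $\gamma$), followed by the finiteness of $\{g : g(K)\cap K' \neq \emptyset\}$ from proper discontinuity and the pigeonhole step on the powers $\gamma, \gamma^2, \ldots, \gamma^{N_0+1}$, is exactly the standard argument that the citation relies on.
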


Let $\mathcal{H}$ be the set of hyperbolic elements.

\begin{proposition}\label{hypelerel}
$\mathcal{H} \subset \mathcal{A}_{1}(G)$.
\end{proposition}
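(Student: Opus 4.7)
The plan is to prove that for any hyperbolic element $g \in \mathcal{H}$, the centralizer $C_G(g)$ is virtually infinite cyclic, which immediately places $g$ in $\mathcal{A}_{1}(G)$.

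Let $g^+, g^- \in \partial X$ be the two fixed points of $g$. First I would observe that every $h \in C_G(g)$ permutes $\{g^+, g^-\}$, since $g(h g^{\pm}) = hg(g^{\pm}) = h g^{\pm}$ forces $hg^{\pm}$ to be a fixed point of $g$. The induced homomorphism $C_G(g) \to \mathrm{Sym}\{g^+, g^-\}$ has kernel $E := C_G(g) \cap \mathrm{Stab}(g^+) \cap \mathrm{Stab}(g^-)$ of index at most two, so it suffices to show $E$ is virtually infinite cyclic. By Definition \ref{relhypgp} no parabolic subgroup contains a hyperbolic element, so $g^\pm$ are conical limit points rather than parabolic points; moreover no infinite-order $h \in E$ can itself be parabolic, because a parabolic element fixes a unique boundary point while every element of $E$ fixes the two distinct points $g^\pm$. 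Thus every infinite-order element of $E$ is hyperbolic with fixed-point set exactly $\{g^+, g^-\}$.

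To finish, I would use a quasi-axis argument. Choose a quasi-axis $\gamma$ of $g$ in $X$, and pick a basepoint $x_0 \in \gamma \cap Y(\mathcal{B})$ in the thick part; this intersection is nonempty because $g^\pm$ are conical limit points, so $\gamma$ exits every horoball. Each $h \in E$ coarsely preserves $\gamma$ and has a well-defined asymptotic translation length $\tau(h) = \lim_{n\to\infty} d(h^n x_0, x_0)/n$; standard $\delta$-hyperbolic estimates show that $\tau : E \to \mathbb{R}$ is a quasi-homomorphism that is additive up to bounded error on commuting pairs. Its kernel is finite because by Lemma \ref{xie} any infinite-order element has a bounded power that displaces $x_0$ by at least $200\delta$, so $\ker \tau$ consists of elements uniformly close to the identity at $x_0$, and proper discontinuity of $G \curvearrowright X$ makes this set finite. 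Its image is discrete because a sequence of distinct elements with $\tau(h_n) \to 0$ would produce infinitely many group elements moving $x_0$ a bounded amount, again contradicting proper discontinuity. A discrete subgroup of $\mathbb{R}$ is cyclic, so $E$ is virtually $\mathbb{Z}$, and hence so is $C_G(g)$.

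The main obstacle will be the discreteness of $\tau(E) \subset \mathbb{R}$, since $\tau$ is only a quasi-homomorphism rather than a genuine homomorphism in the $\delta$-hyperbolic (not CAT(0)) setting. The argument is made possible by the choice of $x_0 \in Y(\mathcal{B})$: in the thick part, small translation length translates directly into small displacement of $x_0$, and Proposition \ref{horo} combined with the properness of $G \curvearrowright X$ then discretizes $\tau$. Once this is in hand, virtual cyclicity of $C_G(g)$, and hence the inclusion $\mathcal{H} \subset \mathcal{A}_1(G)$, follows at once.
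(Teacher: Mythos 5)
Your proposal is correct in its overall conclusion and reaches it by a genuinely different route from the paper. The paper disposes of this in three lines by pure convergence-group theory: it cites Tukia's Corollary 2O (any $h$ commuting with $g$ preserves the fixed pair $A=\{g^{+},g^{-}\}$) and Tukia's Theorem 2I ($\langle g\rangle$ has finite index in the setwise stabilizer of $A$), so $C_{G}(g)$ is squeezed between $\langle g\rangle$ and a virtually cyclic group. Your first step is exactly the content of Corollary 2O, proved directly (and correctly); your quasi-axis argument is in effect a self-contained re-proof of Theorem 2I using the geometry of $X$ rather than the dynamics on $\partial X$. What your approach buys is independence from the Tukia reference and an argument that stays inside the $\delta$-hyperbolic space the paper already works with (reusing $Y(\mathcal{B})$ and properness); what it costs is length and several points where the write-up is looser than it should be. Concretely: (i) the unsigned quantity $\tau(h)=\lim d(h^{n}x_{0},x_{0})/n$ is nonnegative and therefore cannot be the quasimorphism you need --- you must pass to the signed displacement along an oriented quasi-axis, which is legitimate since elements of $E$ fix both endpoints and hence preserve the orientation; (ii) the image of a quasimorphism is not a subgroup of $\mathbb{R}$, so ``a discrete subgroup of $\mathbb{R}$ is cyclic'' does not literally apply --- the standard repair is to take $h_{0}\in E$ of minimal positive translation and show every coset of $\langle h_{0}\rangle$ meets the finite set of elements displacing $x_{0}$ a bounded amount; (iii) Lemma \ref{xie} is not really what forces $\ker\tau$ to be finite --- if $\tau(h)=0$ then \emph{all} powers $h^{n}$ displace $x_{0}$ by a uniformly bounded amount (Morse lemma plus bounded defect), and proper discontinuity alone finishes. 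These are repairable with standard techniques, so the plan is sound, but as written those three steps are the places a referee would push back.
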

\begin{proof}
Let $g \in \mathcal{H}$ be given and $A$ the two fixed points of $\langle g \rangle$ in $\partial X$. If $h \in G$ commutes with $g$, then $h$ fixes $A$ (See \cite[Corollary 2O]{Tuk}). Combining this with the fact that $\langle g \rangle$ is of finite index in the stabilizer $H=\{q \in G | qA=A\}$ (see \cite[Theorem 2I]{Tuk}), the centralizer of $g$ in $G$, $C_{G}(g)$ has a free abelian group of rank at most one as a finite index subgroup. Therefore, $\mathcal{H} \subset \mathcal{A}_{1}(G)$.
\end{proof}

Choose two elements of infinite order from two different peripheral subgroups and denote them by $h_{1}$ and $h_{2}$. We also denote the horoball stabilized by $h_{i}$ by $B_{i}, i=1,2$. Since $h_{i}$ is chosen to be of infinite order, $B_{i}$ is the only horoball stabilized by $h_{i}, i=1,2$. Then

\begin{proposition}\label{relmain}
Let $g \in G \setminus \mathcal{H}$ be an infinite order parabolic element. Then $h_{i}^{k}g$ is hyperbolic for some $i=1,2$ and for some $k, 1 \leq k \leq k_{1}$, where $k_{1}$ is the constant appearing in Lemma \ref{xie}.
\end{proposition}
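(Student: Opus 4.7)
The plan is to invoke Koubi's lemma (Lemma \ref{koubi}) applied to a well-chosen generating set inside $\langle h_1, g\rangle$, exploiting the geometry of the $200\delta$-separated horoball family $\mathcal{B}$ from Proposition \ref{horo} and the uniform power bound from Lemma \ref{xie}. First, since $g$ is parabolic of infinite order it fixes a unique point $\zeta\in\partial X$ and stabilizes the single horoball $B_\zeta\in\mathcal{B}$; since $h_1$ and $h_2$ come from distinct peripheral subgroups, $B_1\neq B_2$, so at least one of $B_1,B_2$ differs from $B_\zeta$. After relabeling, assume $B_1\neq B_\zeta$. Then $h_1^m$, being infinite order and stabilizing only $B_1$, satisfies $h_1^m(B_\zeta)\neq B_\zeta$ for every $m\neq 0$, and symmetrically $g(B_1)\neq B_1$.

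Next, I would apply Koubi's lemma to the symmetric generating set $S=\{h_1^{\pm 1},h_1^{\pm 2},\ldots,h_1^{\pm k_1},g^{\pm 1}\}$ of $\langle h_1,g\rangle$. The crucial uniform estimate $\lambda(x,S)\geq 200\delta$ for every $x\in X$ would be established by cases on the position of $x$ relative to $\mathcal{B}$: (i) for $x\in Y(\mathcal{B})$, Lemma \ref{xie} applied to the infinite-order element $h_1$ provides some $k\in[1,k_1]$ with $d(h_1^k(x),x)\geq 200\delta$; (ii) for $x$ in the interior of $B_\zeta$, the image $h_1^k(x)$ lies in $h_1^k(B_\zeta)\neq B_\zeta$, so the $200\delta$-separation of $\mathcal{B}$ yields $d(h_1^k(x),x)\geq 200\delta$ for any $k\in[1,k_1]$; (iii) for $x$ in the interior of any horoball $B'\neq B_\zeta$ (including $B'=B_1$), the element $g$ does not stabilize $B'$, so $d(g(x),x)\geq 200\delta$ by separation. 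Koubi's lemma then produces a hyperbolic element $\gamma\in\langle h_1,g\rangle$ with $d_S(\mathrm{id},\gamma)\leq 2$.

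Finally, I would analyze the candidate short words. Elements of $d_S$-length at most $2$ are either pure powers $h_1^j$ (parabolic, fixing $\eta_1$), pure powers $g^j$ (parabolic, fixing $\zeta$), or genuinely mixed products $h_1^j g^{\pm 1}$ and $g^{\pm 1}h_1^j$ with $1\leq |j|\leq k_1$. Pure powers cannot be $\gamma$, so $\gamma$ is a mixed product; using that $g h_1^j$ is conjugate to $h_1^j g$ via $h_1^{-j}$, and allowing the freedom to replace the fixed representatives $h_i,g$ by their inverses (each remains an infinite-order parabolic in the same peripheral subgroup), one extracts the desired $i\in\{1,2\}$ and $k\in[1,k_1]$ with $h_i^k g$ hyperbolic. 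The hardest step will be verifying the uniform displacement bound $\lambda(x,S)\geq 200\delta$ on all of $X$, which hinges on the $200\delta$-separation of $\mathcal{B}$ and on the fact that each infinite-order parabolic element stabilizes exactly one horoball; the case analysis must be arranged so that a single $S$ works simultaneously everywhere. A secondary bookkeeping difficulty is matching Koubi's output to the specific form $h_i^k g$ with $k\geq 1$, handled through the inverse and conjugacy manipulations just described.
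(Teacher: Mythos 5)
Your proposal is correct and follows essentially the same route as the paper: combine the $200\delta$-separation of the horoball family with Lemma \ref{xie} to get a uniform displacement bound, apply Koubi's lemma, and rule out the pure powers among the short words. In fact your version is slightly more careful than the paper's at one point --- since the exponent $k$ in Lemma \ref{xie} depends on the basepoint $x$, enlarging the generating set to $\{h_1^{\pm 1},\dots,h_1^{\pm k_1},g^{\pm 1}\}$ (rather than fixing a single $\{h_1^{k},g\}$) is the right way to make the hypothesis of Koubi's lemma hold simultaneously for all $x\in X$; the only residual bookkeeping, which you already flag, is that the output may be $h_1^{-k}g$ rather than $h_1^{k}g$, which is harmless for the covering argument in Theorem \ref{relmain2}.
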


\begin{proof}
Without loss of generality, $h_1$ and $g$ are contained in different peripheral subgroups. Following Lemma \ref{xie}, consider $K=\langle h_{1}^{k}, g \rangle$. We prove that $\lambda(x,\{h_{1}^{k},g\}) > 100 \delta$ for any $x \in X$. Suppose that there exists some $x \in B$ for some $B \in \mathcal{B}$ such that $\lambda(x,\{h_{1}^{k},g\}) \leq 100\delta$. Then $h_{1}^{k}(B)=B$ and $g(B)=B$ (Note that $\mathcal{B}$ is $200\delta$-separated). It follows that the center of $B$ is fixed by $K$. Since $h_{1}^{k}$ and $g$ fix two different centers, this is a contradiction. By the choice of $k$, $d(h_{1}^{k}(x),x) \geq 200\delta$. Therefore, $\lambda(x, \{h_{1}^{k},g\}) \geq 200\delta$. Lemma \ref{koubi} implies that $h_{1}^{k}g$ or $gh_{1}^{k}$ is hyperbolic. (Note that both $h_{1}^{k}$ and $g$ are parabolic.) Furthermore, if $gh_{1}^{k}$ is hyperbolic, so is its conjugate $g^{-1}(gh_1^{k})g = h_{1}^{k}g$. In fact, suppose that $gh_{1}^{k}$ is hyperbolic and fixes exactly two distinct points $\alpha$ and $\beta$, then $h_{1}^{k}g$ fixes $h_{1}^{k}(\alpha)$ and $h_{1}^{k}(\beta)$. Conversely, suppose $h_{1}^{k}g$ fixes a point $\gamma \neq h_{1}^{k}(\alpha), h_{1}^{k}(\beta)$. Choose $\gamma'$ such that $h_{1}^{k}(\gamma')=\gamma$. Note that $\gamma' \neq \alpha, \beta$. But $h_{1}^{k}gh_{1}^{k}(\gamma') = h_{1}^{k}(\gamma') \Rightarrow gh_{1}^{k}(\gamma')=\gamma'$. Therefore, $\gamma'=\alpha$ or $\beta$, which is a contradiction.

\end{proof}

\begin{theorem}\label{relmain2}
Suppose that $G$ is hyperbolic relative to a family $\mathcal{G}$ of infinite finitely generated subgroups. If $|\mathcal{G}| \geq 2$ and at least one subgroup in $\mathcal{G}$ contains an element of infinite order, then $\mathrm{rank}(G) \leq 1$
\end{theorem}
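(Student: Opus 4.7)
The plan is to prove $r(G') \leq 1$ for every finite index subgroup $G' \leq G$, from which $\mathrm{rank}(G) \leq 1$ follows by definition. The two necessary ingredients are already in place: Proposition \ref{hypelerel} supplies $\mathcal{H} \subset \mathcal{A}_{1}(G)$, and Proposition \ref{relmain} converts infinite-order parabolics into hyperbolic translates. It remains to assemble these into a finite covering and then promote the argument to finite index subgroups.

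Fix $h_{1} \in P_{1} \in \mathcal{G}$ of infinite order, and let $h_{2} = u h_{1} u^{-1}$ for a hyperbolic $u \in G$ chosen so that $P_{2} := u P_{1} u^{-1}$ is a peripheral subgroup distinct from $P_{1}$; such $u$ exists by $|\mathcal{G}| \geq 2$ together with the remark preceding Lemma \ref{xie}. I would then verify
\[
G \;=\; \mathcal{H} \,\cup\, \bigcup_{i \in \{1,2\},\, 1 \leq k \leq k_{1}} h_{i}^{-k}\mathcal{H} \,\cup\, \bigcup_{j=1}^{m} g_{j}\mathcal{H},
\]
for some additional finite list $g_{1},\ldots,g_{m}$. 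For $g \in \mathcal{H}$ this is trivial. For $g$ an infinite-order parabolic, $g$ fixes a unique parabolic point and so lies in at most one of $P_{1}, P_{2}$; picking $i$ with $g \notin P_{i}$, Proposition \ref{relmain} produces $1 \leq k \leq k_{1}$ with $h_{i}^{k} g \in \mathcal{H}$, equivalently $g \in h_{i}^{-k}\mathcal{H}$. For torsion $g$, the dynamical argument of Proposition \ref{relmain} transplants: if $g$ stabilizes no horoball, or stabilizes a single horoball different from some $B_{i}$, then Lemma \ref{koubi} applied to the pair $\{h_{i}^{k}, g\}$ supplies a hyperbolic element of word length at most two, which (up to inversion and conjugation) gives $g \in h_{i}^{-k}\mathcal{H}$. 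The only residual possibility is $g \in P_{1} \cap P_{2}$, which is finite because distinct peripheral subgroups intersect in finite subgroups; those finitely many elements are absorbed into the auxiliary translates $g_{j}\mathcal{H}$. Since $\mathcal{H} \subset \mathcal{A}_{1}(G)$, this gives $r(G) \leq 1$.

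For a finite index subgroup $G' \leq G$, the action on $X$ is still properly discontinuous and geometrically finite, and the peripheral family of $G'$ consists of the finite-index intersections $G' \cap g P g^{-1}$ for $g \in G$, $P \in \mathcal{G}$, which are infinite finitely generated subgroups. The elements $h_{1}^{[G:G']}$ and $h_{2}^{[G:G']}$ lie in two distinct peripheral subgroups of $G'$ and retain infinite order, so $G'$ satisfies the hypotheses of the theorem and the preceding argument applied inside $G'$ yields $r(G') \leq 1$. Taking the supremum concludes $\mathrm{rank}(G) \leq 1$. The main obstacle is handling torsion $g$ uniformly in the first step: Proposition \ref{relmain} is stated only for infinite-order parabolics, so one has to re-check that its Koubi-lemma argument survives for elliptic $g$ (it does, because the argument only uses the orbit behavior of $g$ on the $G$-invariant collection of horoballs $\mathcal{B}$) and then carefully sweep up the finite exceptional set $P_{1} \cap P_{2}$ into the final cover.
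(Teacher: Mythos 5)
Your argument is essentially the paper's own proof: the same trichotomy (hyperbolic elements via Proposition \ref{hypelerel}, infinite-order parabolics via Proposition \ref{relmain}, and torsion elements handled by rerunning the Koubi-lemma argument for those not stabilizing both $B_1$ and $B_2$), the same finite exceptional set of torsion elements inside $P_1\cap P_2$ swept into finitely many extra translates of $\mathcal{H}$, and the same passage to finite index subgroups using powers of $h_1,h_2$. The one technical slip is in the last step: for a non-normal finite index subgroup $G'$ of index $n$, the element $h_i^{n}$ need not lie in $G'$; the paper avoids this by first replacing $G'$ with its normal core $N$ (where $h_i^{[G:N]}\in N$ does hold) and invoking $r(G')\leq r(N)$, and you could equally well use $h_i^{n!}$.
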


\begin{proof}
We decompose the set of torsion elements into $\mathcal{E} \coprod \mathcal{E'}$ as follows. A torsion element $g \in \mathcal{E}$ if and only if $g$ stabilizes both $B_{1}$ and $B_{2}$. Otherwise $g \in \mathcal{E}'$. Suppose that $g \in \mathcal{E}'$. Without loss of generality, assume that $g$ does not stabilize $B_{1}$. Then we have $\lambda(x,\{h_{1}^{k},g\}) \geq 100\delta$. In particular, $d(g(x),x) \geq 100\delta$ for $x \in B_{1}$. The same argument as in Proposition \ref{relmain} implies that $h_{1}^{k}g$ is hyperbolic. Since the intersection of two distinct peripheral subgroups is at most finite, $|\mathcal{E}| < \infty$, say $\mathcal{E} =\{l_{1}, \cdots, l_{n}\}$. Choose any hyperbolic element $h \in G$ and let $t_{i}=l_{i}h^{-1}$ for $i=1,\cdots, n$.

By combining with Proposition \ref{relmain}, 
$$G = \mathcal{H} \bigcup (\bigcup_{i=1}^{k_1} h_{1}^{-i}\mathcal{H}) \bigcup (\bigcup_{i=1}^{k_1} h_{2}^{-i}\mathcal{H}) \bigcup(\bigcup_{i=1}^{n} t_{i}\mathcal{H}).$$ Therefore, $\mathrm{r}(G) \leq 1$. 

Next we need to prove that $r(T) \leq 1$ for any finite index subgroup $T$ in $G$. By taking the normal core of $T$, it suffices to show that $r(T) \leq 1$ for any finite index normal subgroup $T$ in $G$. Recall $r(G') \geq r(G)$ if $G'$ is a finite index subgroup of $G$.
Let $T$ be a finite index normal subgroup in $G$ and $m=[G:T]$. Also let $\mathcal{H}_{T} = \mathcal{H} \bigcap T$. Recall that $\mathcal{H}$ is the set of hyperbolic elements in $G$. Then $\mathcal{H}_{T} \subset \mathcal{A}_{1}(T)$. It can be easily verified that all arguments in proving $r(G) \leq 1$ apply without any change to prove $r(T) \leq 1$, namely,

\begin{itemize}
\item $h_{i}^{m} \in T$ is an infinite order parabolic element and stabilizes a horoball $B_{i}$ for $i=1,2$.
\item For any $g \in T\setminus \mathcal{H}_{T}$ of infinite order, $(h_{i}^{m})^{k} g \in \mathcal{H}_{T}$ for some $i=1,2$.
\item One can decompose the set of torsion elements in $T$ as follows : $g \in \mathcal{E}_{T}$ if and only if $g$ stabilizes both $B_{1}$ and $B_{2}$. Otherwise $g \in \mathcal{E'}_{T}$. For any element $g$ in $\mathcal{E'}_{T}$, $(h_{i}^{m})^{k} g \in \mathcal{H}_{T}$ for some $i=1,2$. Since $\mathcal{E}_{T}$ is finite, one can choose any hyperbolic element in $T$ such that $\mathcal{E'}_{T}$ can be covered by finitely many translates of $\mathcal{H}_{T}$.
\end{itemize}
\end{proof}


\subsection{$\mathrm{CAT}(0)$ Spaces with Isolated Flats}

$\mathrm{CAT}(0)$ spaces with isolated flats were first introduced by Kapovich-Leeb and Wise, independently. In \cite{KapLee}, Kapovich and Leeb study a class of $\mathrm{CAT}(0)$ spaces in which the maximal flats are disjoint and separated by regions of strictly negative curvature. Since then, they have been studied by a number of authors, in particular, because of their strong connections to relatively hyperbolic groups.

Throughout this subsection, a \emph{$k$-flat} is an isometrically embedded copy of Euclidean space $\mathbb{E}^{k}$ for $k \geq 2$. In particular, we don't consider a geodesic line as a flat. Let $\mathrm{Flat}(X)$ be the space of all flats in $X$ with the topology of uniform convergences on bounded sets. A $\mathrm{CAT}(0)$ space $X$ with a geometric group action has \emph{isolated flats} if it contains an equivariant collection $\mathcal{F}$ of flats such that $\mathcal{F}$ is closed and isolated in $\mathrm{Flat}(X)$, and each flat $F \subset X$ in the space is contained in a uniformly bounded tubular neighborhood of some $F' \in \mathcal{F}$. See \cite[Theorem 1.2.3]{HruKle} for equivalent formulations of $\mathrm{CAT}(0)$ spaces with isolated flats.

Let $X$ be a CAT(0) space with isolated flats and $G$ be a group acting geometrically on $X$. One of main results in \cite{HruKle} is

\begin{theorem}\cite[Theorem 1.2.1]{HruKle}\label{hrukle}
The following are equivalent.
\begin{enumerate}
\item $X$ has isolated flats.
\item $X$ is a relatively hyperbolic space with respect to a family of flats in $\mathcal{F}$.
\item $G$ is a relatively hyperbolic group with respect to a collection of virtually abelian subgroups of rank at least two.
\end{enumerate}
\end{theorem}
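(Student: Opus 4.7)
My plan is to establish the three-way equivalence by proving the chain $(1)\Rightarrow(2)\Rightarrow(3)\Rightarrow(1)$, exploiting the two natural inputs that the setup provides: the geometric action of $G$ (for passing between group-theoretic and space-level statements) and the CAT(0) geometry (for producing and recognizing flats).

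For $(1)\Rightarrow(2)$, I would show that the collection $\mathcal{F}$ witnesses an asymptotically tree-graded structure on $X$ (equivalently, that the space obtained by coning off the flats in $\mathcal{F}$ is $\delta$-hyperbolic). The first sub-step is to verify, from the isolation hypothesis together with cocompactness of the $G$-action, that there is a uniform lower bound on the distance between distinct flats of $\mathcal{F}$ and a uniform bound $D$ so that every flat in $X$ is contained in a $D$-neighborhood of some $F\in\mathcal{F}$. The second sub-step is to show that geodesic triangles whose sides avoid a large neighborhood of $\bigcup\mathcal{F}$ are uniformly thin: here I would argue by contradiction using an ultralimit/asymptotic cone argument: if triangles could be arbitrarily fat outside neighborhoods of $\mathcal{F}$, an asymptotic cone would produce a flat half-plane not contained in the limiting flats, contradicting isolation. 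The third sub-step is to check Bowditch's geometrical finiteness axioms for $G\curvearrowright\partial X$, with the points at infinity of flats in $\mathcal{F}$ serving as bounded parabolic points and all other boundary points being conical limit points; cocompactness of $G$ on $X$ minus horoball-neighborhoods of $\mathcal{F}$ is the key ingredient.

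For $(2)\Rightarrow(3)$, once $X$ is known to be a relatively hyperbolic space with respect to $\mathcal{F}$, the stabilizer $G_{F}$ of any flat $F\in\mathcal{F}$ is precisely the maximal parabolic subgroup fixing the point at infinity associated to $F$ in Bowditch's framework. Because $G$ acts geometrically on $X$ and $F$ is $G_{F}$-invariant, $G_{F}$ acts properly on $F\cong\mathbb{E}^{k}$ with $k\geq 2$, and one shows cocompactness on $F$ using the fact that the parabolic point associated to $F$ is bounded. Bieberbach's theorem then identifies $G_{F}$ as a virtually abelian group of rank $k\geq 2$. Finite generation and the fact that conjugacy classes of such stabilizers are finite follow from Proposition~\ref{horo} applied to this setting.

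For $(3)\Rightarrow(1)$, I would use the Flat Torus Theorem: each peripheral subgroup $P\in\mathcal{G}$, being virtually $\mathbb{Z}^{k}$ with $k\geq 2$ and acting on the CAT(0) space $X$ by semi-simple isometries, stabilizes a $k$-flat $F_{P}\subset X$; let $\mathcal{F}$ be the $G$-orbit of these flats. Closedness and local finiteness of $\mathcal{F}$ in $\mathrm{Flat}(X)$ follow from the fact that distinct peripheral cosets have finite intersection and from the convergence dynamics on $\partial X$: any accumulation of flats in $\mathcal{F}$ would produce an accumulation of the corresponding parabolic fixed points in $\partial X$, violating the discreteness of parabolic points of a geometrically finite action. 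The uniform bound on flats not in $\mathcal{F}$ comes from pushing an arbitrary flat to infinity and using the fact that its boundary sphere at infinity must lie inside the limit set of some peripheral subgroup.

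The hard part, and where I would expect to spend the most effort, is the middle step of $(1)\Rightarrow(2)$: establishing uniform thinness of geodesic triangles away from the flats. The ultralimit contradiction argument requires careful bookkeeping to ensure the limiting flat half-plane is not absorbed into a limit of flats in $\mathcal{F}$, and this is essentially where the full strength of the isolated flats definition (not merely pairwise separation, but uniform approximation of all flats by $\mathcal{F}$) must be used.
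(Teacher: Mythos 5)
This statement is not proved in the paper at all: Theorem~\ref{hrukle} is quoted verbatim from Hruska--Kleiner \cite{HruKle} (their Theorem~1.2.1) and is used here as a black box, so there is no ``paper's own proof'' to compare your outline against. Judged on its own merits, your plan does follow the broad architecture of the actual Hruska--Kleiner argument: the passage $(1)\Rightarrow(2)$ via asymptotic cones and the Dru\c{t}u--Sapir tree-graded characterization (with the relatively thin triangle property as the pivot), the identification of flat stabilizers as virtually abelian peripherals via Bieberbach for $(2)\Rightarrow(3)$, and the Flat Torus Theorem for $(3)\Rightarrow(1)$. Be aware, though, that this is a substantial theorem whose proof occupies a large part of \cite{HruKle}, and your middle step of $(1)\Rightarrow(2)$ --- uniform thinness of triangles away from $\bigcup\mathcal{F}$ and the verification that the ultralimit half-plane is not absorbed into a limit of flats of $\mathcal{F}$ --- is exactly where the real work lies; a sketch at this level of detail does not yet constitute a proof.

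One step as written is actually wrong. In $(3)\Rightarrow(1)$ you derive closedness and isolation of $\mathcal{F}$ from ``the discreteness of parabolic points of a geometrically finite action.'' Bounded parabolic points of a geometrically finite convergence action are in general \emph{dense} in the limit set (already for finite-covolume Kleinian groups), so no contradiction arises from an accumulation of parabolic fixed points in $\partial X$. The correct mechanism is coarse, not dynamical: distinct peripheral cosets have uniformly bounded coarse intersection (equivalently, one has an $r$-separated $G$-invariant horoball family as in Proposition~\ref{horo}), so two flats of $\mathcal{F}$ that nearly agree on a large ball would force two distinct peripheral cosets to have unboundedly large coarse intersection, which is impossible. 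Similarly, the claim that an arbitrary flat has ``boundary sphere inside the limit set of some peripheral subgroup'' should be replaced by the Dru\c{t}u--Sapir statement that any quasi-flat of rank at least $2$ in an asymptotically tree-graded space lies in a uniform neighborhood of a single piece \cite{DruSap}.
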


\begin{remark}
\begin{itemize}
\item In the second statement above, the term ``relatively hyperbolic" for metric spaces was introduced by Dru\c{t}u and Sapir. In \cite{DruSap}, they used the term ``asymptotically tree graded" for such spaces and proved that the metric and group theoretic notions of being relatively hyperbolic are equivalent for a finitely generated group with a word metric.
\item If $X$ has isolated flats with respect to $\mathcal{F}$, then $\mathcal{F}$ is locally finite. Combining this with the Bieberbach Theorem shows that each flat in $\mathcal{F}$ is $G$-periodic with virtually abelian stabilizer. Note that the geometric action of $G$ on $X$ induces a quasi-isometry and being relatively hyperbolic with respect to quasiflats is a geometric property. (See \cite[Theorem 5.1]{DruSap}.) This quasi-isometry takes $\mathcal{F}$ to the left cosets of a collection of virtually abelian subgroups of rank at least two. See \cite[Section 3, 4]{HruKle} for details.
\end{itemize}
\end{remark}

\begin{proposition}\label{catmain}
Let $G$ be a group acting geometrically on a $\mathrm{CAT}(0)$ space with isolated flats and $|\mathcal{F}| \geq 2$. Then
$\algrk(G) \leq 1$.
\end{proposition}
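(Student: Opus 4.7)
The plan is to reduce the statement to Theorem~\ref{relmain2} via the Hruska--Kleiner characterization provided by Theorem~\ref{hrukle}. Applying Theorem~\ref{hrukle} to the hypothesis, $G$ is hyperbolic relative to a family $\mathcal{G}$ of virtually abelian subgroups of rank at least two. So the entire task reduces to verifying the two hypotheses of Theorem~\ref{relmain2}, namely that $|\mathcal{G}|\geq 2$ and that at least one peripheral subgroup contains an element of infinite order.

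The infinite order condition is immediate: every subgroup in $\mathcal{G}$ is virtually abelian of rank at least two, so it contains a copy of $\mathbb{Z}^{2}$ as a finite index subgroup, and in particular it contains elements of infinite order. For the cardinality condition I would argue via the correspondence, recorded in the remark following Theorem~\ref{hrukle}, between the flats in $\mathcal{F}$ and the left cosets of peripheral subgroups: each $F\in\mathcal{F}$ is $G$-periodic with virtually abelian stabilizer, and these stabilizers (together with their conjugates) are precisely the elements of $\mathcal{G}$. Two distinct flats $F\neq F'$ in $\mathcal{F}$ have distinct stabilizers, since the stabilizer of a flat acts cocompactly on it, so a bounded neighborhood of any orbit recovers the flat, and distinct flats in $\mathcal{F}$ are isolated from one another. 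Therefore $|\mathcal{F}|\geq 2$ forces $|\mathcal{G}|\geq 2$, and Theorem~\ref{relmain2} yields $\algrk(G)\leq 1$.

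The only step that requires any actual bookkeeping is matching the size of the geometric family $\mathcal{F}$ with the size of the algebraic family $\mathcal{G}$ of maximal parabolic subgroups, and this is where I would be most careful. Everything else is a direct invocation of results already in hand: Theorem~\ref{hrukle} provides the relatively hyperbolic structure, while Theorem~\ref{relmain2} promotes $|\mathcal{G}|\geq 2$ together with the existence of an infinite-order peripheral element into the desired bound on the algebraic rank.
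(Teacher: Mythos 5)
Your proposal is correct and follows essentially the same route as the paper: invoke Theorem~\ref{hrukle} to get relative hyperbolicity with respect to virtually abelian peripheral subgroups of rank at least two, note that $|\mathcal{F}|\geq 2$ gives at least two peripheral subgroups, and apply Theorem~\ref{relmain2}. Your extra care in verifying the infinite-order hypothesis and the $\mathcal{F}$-to-$\mathcal{G}$ correspondence is more detail than the paper supplies, but the argument is the same.
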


\begin{proof}
By Theorem \ref{hrukle}, $G$ is hyperbolic relative to a collection of virtually abelian subgroups of rank at least two. Since we assume that $|\mathcal{F}| \geq 2$, there are at least two peripheral subgroups in $G$. Proposition \ref{relmain2} applies that $\mathrm{rank}(G) \leq 1$.
\end{proof}

\begin{remark}
In the case that $\mathcal{F}$ consists of a single flat $F$, one can conclude that $G$ acts geometrically on $F$. Therefore, $\mathrm{rank}(G)$ is equal to $dim(F) \geq 2$ by \cite{BalEbe}.
\end{remark} 

\bibliography{referencesforproject3}{}
\bibliographystyle{plain}

\leftline{\bf Author's addresses:}

\noindent 
Raeyong Kim: 

Department of Mathematics, The Ohio State University, 
231 West 18th Avenue, Columbus, Ohio 43210-1174, United States.  

{\tt kimr@math.ohio-state.edu}

\end{document}